\newcommand{\E}{\mathbb{E}}
\newcommand{\F}{\mathcal{F}}
\newcommand{\N}{\mathbb{N}}
\renewcommand{\P}{\mathbb{P}}
\newcommand{\R}{\mathbb{R}}
\newcommand{\cF}{\mathcal{F}}
\newcommand{\cN}{\mathcal{N}}
\newcommand{\cS}{\mathcal{S}}
\newcommand{\cU}{\mathcal{U}}
\DeclareMathOperator{\trace}{Tr}
\DeclareMathOperator{\id}{Id}
\newcommand{\dd}{\,\mathrm{d}}
\newcommand{\eps}{\varepsilon}
\newcommand{\loc}{\mathrm{loc}}
\newtheorem{theorem}{Theorem}[section]
\newtheorem{definition}[theorem]{Definition}
\newtheorem{corollary}[theorem]{Corollary}
\newtheorem{lemma}[theorem]{Lemma}
\newtheorem{proposition}[theorem]{Proposition}
\theoremstyle{remark}
\newtheorem{remark}[theorem]{Remark}
\numberwithin{equation}{section}
\renewcommand{\leq}{\leqslant}
\renewcommand{\le}{\leqslant}
\renewcommand{\geq}{\geqslant}
\renewcommand{\ge}{\geqslant}
\title[Refined uniqueness for 2D Euler and gSQG with rough Kraichnan]{Refined uniqueness results for 2D Euler and gSQG\\ with rough Kraichnan noise}
\author[M. Bagnara]{Marco Bagnara}
\address[M. Bagnara]{Department of Mathematics, Imperial College London, UK.}
\email{m.bagnara@imperial.ac.uk}
\author[L. Galeati]{Lucio Galeati}
\address[L. Galeati]{ Dipartimento di Ingegneria e Scienze dell’Informazione e Matematica, Universita degli Studi
dell’Aquila, Italy.}
\email{lucio.galeati@univaq.it}
\begin{document}
\begin{abstract}
We prove strong well-posedness results for the stochastic 2D Euler equations in vorticity form and generalized SQG equations, with $L^p$ initial data and driven by a spatially rough, incompressible transport noise of Kraichnan type.
Previous works addressed this problem with noise of spatial regularity $\alpha\in (0,1/2)$, in a setting where a rougher noise yields a stronger regularization.
We remove this limitation by allowing any $\alpha \in (0,1)$, covering the same range of parameters for which anomalous regularization effects are known to occur in passive scalars.
In particular, this covers the physically relevant case $\alpha=2/3$, associated with the Richardson-Kolmogorov scaling of energy cascade. 
\\[1ex]
\textbf{Keywords:} 2D Euler equations, generalized SQG equations, rough Kraichnan noise, regularization by noise. \\[1ex]
\textbf{MSC (2020):} Primary: 60H50; Secondary: 60H15, 35Q35, 76M35.
\end{abstract}

\maketitle


\section{Introduction}\label{sec:introduction}
In this manuscript, we consider a class of stochastic 2D active scalar models, that comprises the 2D Euler and the generalized Surface Quasi-Geostrophic equations (gSQG for short), perturbed by a rough transport noise of Kraichnan type. Specifically, we study in $\R^2$ the SPDE
\begin{equation}\label{eq:SPDEs_Strat}\begin{cases}
		\dd \theta + u\cdot\nabla \theta \dd t + \circ \dd W\cdot\nabla \theta = 0,\\
		u = -\nabla^\perp \Lambda^{-2+\beta}\theta, \qquad \theta\vert_{t=0}=\theta_0,
\end{cases}
\end{equation}
where the unknown $\theta$ is a scalar field, while $u$ and $W$ are divergence-free velocity fields;
$u$ is recovered from $\theta$ by the nonlocal operator $\nabla^\perp \Lambda^{-2+\beta}$, where $\Lambda^s=(-\Delta)^{s/2}$ and we restrict ourselves to $\beta\in [0,1]$.
The case $\beta = 0$ corresponds to (stochastic) 2D Euler equation in vorticity form, $\beta=1$ to SQG and the intermediate range $\beta\in(0,1)$ to gSQG. 
The term $\circ \dd W$ in \eqref{eq:SPDEs_Strat} denotes stochastic integration in the Stratonovich sense, associated to a centered Gaussian field $W$ which is Brownian in time and coloured in space.
Such a field is identified by its covariance, which we uniquely determined by fixing a parameter $\alpha>0$ via the formula
\begin{equation}\label{eq:noise_covariance}\begin{split}
	&\E\big[ W_t(x)\otimes W_s(y)\big] = (s\wedge t) C(x-y),\\
	&C(z)= (2\pi)^{-1} \int_{\R^2} \langle k\rangle^{-2-2\alpha}\, P^\perp_k\, e^{i k\cdot z} \dd k\quad \text{for}\quad P^\perp_k:= I - \frac{k}{|k|}\otimes \frac{k}{|k|},\quad \forall\, k\neq 0, 
\end{split}\end{equation}
where $\langle k \rangle \coloneq \sqrt{1+|k|^2}$.
The exponent $\alpha$ dictates the spatial roughness of $W$, which is $\P$-a.s. almost $C^\alpha$-regular, but never better.
For later convenience, let us also define\footnote{Here we follow \cite{drivas2025anomalous}, differently from other works (cf. \cite{coghi2023existence,GGM2024}) where $Q$ denotes the covariance itself.}
\begin{equation}\label{eq:def_of_Q}
    Q(z) \coloneq C(0)-C(z).
\end{equation}

The choice of random velocity field $W$ given by \eqref{eq:noise_covariance}, sometimes referred to as a \emph{Kraichnan noise}, was first considered in \cite{Kraichnan1968} in the context of synthetic models for passive scalar turbulence (amounting to a linear SPDE).
In the \emph{rough regime} $\alpha\in (0,1)$, this model has been recently revisited in \cite{Rowan2023,GGM2024,drivas2025anomalous,rowan2025obukhov}, providing mathematically rigorous proofs and new insights of its numerous striking features.
In particular, such a velocity field $W$ induces both anomalous dissipation and anomalous regularization effects on passive scalars, due to energy cascading from lower to higher frequencies.

A fundamental theoretical question in turbulence is to understand whether more realistic, nonlinear models of fluids can display similar phenomena; a closely related one is whether turbulent fluids can possess ``self-stabilizing'' features, due to energy being anomalously dissipating at smaller scales, preventing the formation of either blow-up or bifurcations in the dynamics. 
With this motivation in mind, the stochastically perturbed active scalar model \eqref{eq:SPDEs_Strat} is a natural test study, being sufficiently idealized yet highly nonlinear and nonlocal.
The (admittedly, a bit artificial) introduction of the stochastic perturbation $W$ is meant to mimic the unresolved small scales of an intrinsically turbulent fluid and has been advocated in different ways by several authors, see for instance \cite{BrCaFl1991,Holm2015,FlandoliPappalettera2021}; in particular, Stratonovich transport noise preserves several Lagrangian and geometric features of the original deterministic PDE.

Another reason for studying the well-posedness of weak solutions to \eqref{eq:SPDEs_Strat} is the lack of a satisfactory global-in-time solution theory for its deterministic counterpart, with explicit ill-posedness and non-uniqueness results available in the literature. For gSQG, let us mention \cite{choi2024well} for ill-posedness in H\"older spaces, \cite{JeoKim2024,cordoba2025strong} for strong norm inflation and non-existence in Sobolev scales.
Vishik \cite{vishik2018a,vishik2018b} first established non-uniqueness of the forced $2$D Euler equations for vorticities in $L^\infty_t (L^1 \cap L^p \cap \dot H^{-1})$; the result has been revisited in \cite{ABCDGJK2024,castro2024} and recently extended to gSQG and SQG in \cite{castro2025unstable}.
In the unforced case, non-uniqueness by convex integration has been shown for $2$D Euler in \cite{brue2024flexibility}, in the class $C_t (L^p \cap \dot H^{-1})$ for some $p>1$; convex integration schemes have been employed to the momentum formulation of gSQG in \cite{ma2022some, zhao2024onsager}, yielding non-unique very weak solutions (in classes of negative regularity).
For a more detailed overview of the literature, we refer to \cite[Sec. 1.4]{BGM2025}.
In this sense, the possibility to restore well-posedness by considering the stochastically perturbed SPDE \eqref{eq:SPDEs_Strat} pertains to the class of \emph{regularization by noise} phenomena.

The main goal of this note is to establish global well-posedness results for \eqref{eq:SPDEs_Strat}, for relevant classes of initial data in $L^p$.
This problem has been previously analysed in \cite{coghi2023existence,JiaoLuo2025_Euler} (for $2$D Euler) and \cite{JiaoLuo2025_mSQG,BGM2025} (for gSQG); in all the aforementioned works, uniqueness statements required the restriction $\alpha \le 1/2$, with some authors considering it to be possibly structural to their approach (see \cite[Rem. 2.14]{coghi2023existence}).
This limitation is unsatisfactory for a few reasons. First, in the context of turbulence modelling, the most relevant choice of parameter is arguably $\alpha=2/3$, see \cref{rem:numerologies} for a deeper explanation.
Secondly, the main ``coercivity estimates'' used in the proof of the aforementioned works (cf.\ \eqref{eq:intro_evolution_xi}-\eqref{eq:intro_reg_estimate} below), arising from the roughness of the noise, and how it affects the evolution of negative Sobolev norms $\dot H^{-s}$, are valid in the full range $\alpha\in (0,1)$, see \cite{coghi2023existence}; such estimates are also deeply tied to the anomalous regularization results for passive scalars from \cite{GGM2024,drivas2025anomalous,rowan2025obukhov}.
Finally, the regularization effect from $W$ gets stronger as $\alpha$ gets smaller (with a ``regularity gain'' or order $1-\alpha$, see \cite{GGM2024}), hinting that the constraint $\alpha\leq 1/2$ has no particular meaning and only makes the problem mathematically easier to handle.

Our main contribution is to establish pathwise uniqueness results for \eqref{eq:SPDEs_Strat} for any $\alpha \in (0,1)$, under suitable conditions on $p$ and $\beta$, see \cref{subsec:main_results} for the precise statements.
Moreover, we refine the existing results by dropping any $L^1$-integrability assumptions and dealing in all cases with initial data in the scaling critical class $L^{p_\star}$, see \cref{rem:comparison,rem:scaling} below.
The main novel ideas in the strategy of proof are shortly illustrated in \cref{subsec:ideas_proof}.

\subsection{Main results and comments}\label{subsec:main_results}

Given $\alpha\in (0,1)$, $\beta\in [0,1]$ such that $\alpha+\beta/2\leq 1$, we define the {\em scaling critical exponent} $p_\star=p_\star(\alpha,\beta)$ associated to \eqref{eq:SPDEs_Strat} by
\begin{equation}\label{eq:scaling_critical_exponent}
    \alpha+\frac{\beta}{2}=1-\frac{1}{p_\star}, \qquad \text{equivalently} \qquad p_\star=\frac{1}{1-\alpha-\beta/2}
\end{equation}

\cref{thm:main_Euler,thm:main_gSQG} below are the main results of the paper. We refer to \cref{subsec:notations} for a detailed explanation of the shorthand notations for function spaces adopted in the statements.

\begin{theorem}[2D Euler]\label{thm:main_Euler}
	Let $\alpha \in (0,1)$, $\beta =0$, $p\in (1,\infty)$ satisfy
	\begin{equation}\label{eq:parameters_uniqueness_Euler}
		\alpha \leq 1-\frac{1}{p}, \qquad \text{equivalently} \qquad p \geq p_\star = \frac{1}{1-\alpha}.
	\end{equation}
	Then, for any $\theta_0\in L^p \cap \dot H^{-1}$, strong existence and pathwise uniqueness hold for \eqref{eq:SPDEs_Strat}, in the sense of \cref{def:solution}, in the class
	\begin{equation*}
		\theta \in L^\infty_{\omega,t} L ^p \cap L^2_{\omega, t} ( \dot H^{-1} \cap \dot H^{-\alpha} ).
	\end{equation*}
\end{theorem}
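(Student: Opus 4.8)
The plan is to treat the two claims separately: strong existence will follow from a compactness scheme together with the pathwise uniqueness and a Gy\"ongy--Krylov argument, so the analytic heart of the statement is the uniqueness estimate. I would begin with the \emph{a priori bounds}. Conservation of $L^p$ norms under Stratonovich transport (both $u\cdot\nabla$ and $\circ\dd W\cdot\nabla$ are divergence-free transports, so $\|\theta_t\|_{L^p}=\|\theta_0\|_{L^p}$ $\P$-a.s.) gives the $L^\infty_{\omega,t}L^p$ control. Rewriting \eqref{eq:SPDEs_Strat} in It\^o form introduces the Stratonovich corrector, which by isotropy of $C$ is a constant multiple of the Laplacian; computing the evolution of $\|\theta_t\|_{\dot H^{-1}}^2$, the Euler nonlinearity drops out completely (writing $u=\nabla^\perp\Psi$ with stream function $\Psi=-\Lambda^{-2}\theta$, one has $\langle\nabla^\perp\Psi\cdot\nabla g,\Psi\rangle=0$ for every $g$, in particular for $g=\Delta\Psi$), while the noise terms are bounded, through the coercivity estimate of \cite{coghi2023existence} (valid for all $\alpha\in(0,1)$), by $-c\,\E\|\theta_t\|_{\dot H^{-\alpha}}^2+C\,\E\|\theta_t\|_{\dot H^{-1}}^2$. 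Gr\"onwall then yields the $L^2_{\omega,t}(\dot H^{-1}\cap\dot H^{-\alpha})$ bound.

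For \emph{existence} I would regularize the noise and the datum (or add a vanishing viscosity), solve the approximating SPDEs classically, and pass to the limit: the bounds above hold uniformly along the approximation and, combined with the $C^{1/2-}_t\dot H^{-N}$ time regularity read off from the equation, give tightness of the laws in $C_t\dot H^{-N}_{\loc}$; after Prokhorov and Skorokhod one obtains a.s.\ convergence $\theta^\eps\to\theta$ in $C_t\dot H^{-N}_{\loc}$, hence strongly in $L^2_t H^{-\alpha-\delta}_{\loc}$, so that $u^\eps=-\nabla^\perp\Lambda^{-2}\theta^\eps\to u$ strongly in $L^2_tL^2_{\loc}$; against the weak-$\ast$ convergence in $L^\infty_tL^p$ this identifies the nonlinearity $u^\eps\theta^\eps$ in the weak formulation, and the Stratonovich term is handled by the standard It\^o reformulation and passage to the limit in the corrector and the martingale. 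This produces a weak solution in the sense of \cref{def:solution} in the stated class; the Gy\"ongy--Krylov lemma then upgrades weak existence plus pathwise uniqueness to strong existence.

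The core is \emph{pathwise uniqueness}. Given two solutions $\theta^1,\theta^2$ with the same datum, set $\xi=\theta^1-\theta^2$ and $v=-\nabla^\perp\Lambda^{-2}\xi$, so that $\dd\xi+\big(\nabla\cdot(u^1\xi)+v\cdot\nabla\theta^2\big)\dd t+\circ\dd W\cdot\nabla\xi=0$, and apply the It\^o formula to $\|\xi_t\|_{\dot H^{-\gamma}}^2$ for a suitable $\gamma\in[\alpha,1]$ (legitimate by mollification, using $\xi\in L^2_{\omega,t}(\dot H^{-1}\cap\dot H^{-\alpha})\subseteq L^2_{\omega,t}\dot H^{-\gamma}$). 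The It\^o corrector together with the quadratic variation of the transport martingale combine, again by the coercivity estimate, into $-c\,\E\|\xi_t\|_{\dot H^{1-\alpha-\gamma}}^2+C\,\E\|\xi_t\|_{\dot H^{-\gamma}}^2$: a regularization gain of exactly $1-\alpha$ derivatives. It then remains to absorb $\E\big|\langle u^1\xi+v\theta^2,\nabla\Lambda^{-2\gamma}\xi\rangle_{L^2}\big|$ into $\tfrac c2\E\|\xi_t\|_{\dot H^{1-\alpha-\gamma}}^2$ plus a Gr\"onwall-admissible remainder. The transport contribution $\langle u^1\xi,\nabla\Lambda^{-2\gamma}\xi\rangle$ carries the cancellation $\langle u^1\xi,\nabla\xi\rangle=0$ and, $u^1$ being one derivative smoother than $\theta^1$, is controlled by commutator estimates and is subcritical; the genuinely dangerous term is $\langle v\theta^2,\nabla\Lambda^{-2\gamma}\xi\rangle$, which I would bound by H\"older and Sobolev inequalities, distributing the $1-\alpha$ gain between the factor $v$ (one derivative smoother than $\xi$) and the factor $\nabla\Lambda^{-2\gamma}\xi$. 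Keeping track of the exponents, the two interpolation weights add up to exactly $\tfrac{2}{p(1-\alpha)}$, \emph{independently of} $\gamma$, which is $<2$ precisely when $p>p_\star$; Young's inequality then closes the estimate and Gr\"onwall forces $\xi\equiv0$.

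The main obstacle is the borderline case $p=p_\star$ (equivalently $\alpha=1-1/p$), where the weights sum to exactly $2$ and one is left with the scaling-critical term $\|\theta^2_t\|_{L^{p_\star}}\|\xi_t\|_{\dot H^{1-\alpha-\gamma}}^2$, not absorbable for large data. Here I would use the extra a priori information $\theta^2\in L^2_{\omega,t}\dot H^{-\alpha}$: interpolating $L^\infty_tL^{p_\star}$ with $L^2_t\dot H^{-\alpha}$ places $\theta^2$ in some $L^r_tZ$ with $r<\infty$, turning the bad term (after Young) into $\int_0^t g(s)\,\E\|\xi_s\|_{\dot H^{-\gamma}}^2\dd s$ with $g\in L^1_\loc$, so Gr\"onwall still applies. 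This is also the point at which the previous restriction $\alpha\leq1/2$ is removed: when $\alpha>1/2$ the gain $1-\alpha$ is small and $\dot H^{1-\alpha-\gamma}$ may be a negative-regularity space, so one cannot work in a fixed space such as $\dot H^{-\alpha}$ and must instead exploit the freedom in $\gamma$ and handle the products with care. The remaining points --- justification of the It\^o formula in negative Sobolev norms by mollification, uniformity of the coercivity constants along the approximation, and the Gy\"ongy--Krylov step --- are routine in this setting.
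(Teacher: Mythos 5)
Your overall architecture (a priori bounds, compactness existence upgraded by Gyöngy--Krylov, Grönwall on a negative Sobolev norm of the difference using the coercivity of the noise) is sound, and for the \emph{strictly subcritical} range $p>p_\star$ your treatment of the nonlinearity is a viable alternative to the paper's: you keep the vorticity formulation and the decomposition $u^1\xi+v\,\theta^2$, handling the transport part by a commutator estimate and the $v\,\theta^2$ part by Hölder--Sobolev; the paper instead rewrites $\cN^0(\theta)=\nabla^\perp\cdot((u\cdot\nabla)u)$, which makes the term containing $u^2$ vanish identically by antisymmetry ($\langle u,(u^2\cdot\nabla)u\rangle=0$) and leaves a single term $\langle u,(u\cdot\nabla)u^1\rangle$ in which the zero-order operator $\nabla\nabla^\perp\Lambda^{-2}$ lands on $\theta^1$. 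Your exponent count $2/(p(1-\alpha))$ is correct, and the removal of $\alpha\le 1/2$ indeed comes from redistributing derivatives rather than from the choice of $\gamma$ (note also that the solution class only controls $\xi$ in $\dot H^{-1}\cap\dot H^{-\alpha}$, so the coercive norm $\dot H^{-\gamma+1-\alpha}$ is meaningful only for $\gamma=1$; the freedom in $\gamma$ you invoke is not actually available).

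The genuine gap is the critical case $p=p_\star$, which is part of the statement and one of the paper's main refinements. There, \emph{both} of your terms are exactly critical: the commutator bound for the transport part gives $\|\theta^1\|_{L^{p_\star}}\|\xi\|^2_{\dot H^{-\alpha}}$ (so it is not ``subcritical'' as you claim), and the $v\,\theta^2$ term gives $\|\theta^2\|_{L^{p_\star}}\|\xi\|^2_{\dot H^{-\alpha}}$, i.e.\ the full power $2$ sits on the coercive norm with a coefficient that is not small. Your proposed fix --- interpolating $\theta^2$ between $L^\infty_t L^{p_\star}$ and $L^2_t\dot H^{-\alpha}$ to get a coefficient $g\in L^1_{\mathrm{loc}}$ and then applying Young --- cannot work: Young's inequality can only trade exponent between the $\dot H^{-\alpha}$ and $\dot H^{-1}$ norms when the weight on the high norm is strictly less than $2$, and time-integrability of the prefactor is irrelevant to that obstruction. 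Worse, in your decomposition the critical coefficient is $\|\theta^2\|_{L^{p_\star}}$ with $\theta^2$ an \emph{arbitrary} weak solution, for which no smallness is available. The paper resolves this with two ingredients you are missing: (a) the momentum-form cancellation ensures that only the \emph{constructed} solution $\theta^1$ ever appears in the critical slot (a weak--strong uniqueness structure), and (b) that constructed solution inherits from the vanishing-viscosity scheme a decomposition $\theta^1=\theta^{1,>}+\theta^{1,<}$ with $\|\theta^{1,>}\|_{L^\infty_{\omega,t}L^{p_\star}}\le\eps$ arbitrarily small and $\theta^{1,<}$ bounded in $L^q$ for some $q>p_\star$ (the class $\cU^{p_\star}$), so the critical contribution is absorbed by $-\kappa_1\|\xi\|^2_{\dot H^{-\alpha}}$ and the remainder is subcritical. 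Without some analogue of both (a) and (b), your Grönwall estimate does not close at $p=p_\star$. A secondary point to flag: the endpoint Calderón-type commutator estimate $\|[\Lambda,u^1]\cdot\nabla\Lambda^{-2}\xi\|_{L^2}\lesssim\|\nabla u^1\|_{L^p}\|\Lambda^{-1}\xi\|_{L^q}$ at $s=1$ needs a careful justification (paraproduct decomposition), since the standard fractional Kato--Ponce commutator bounds are stated for $s\in(0,1)$.
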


In order to provide a similar statement for gSQG, we need to introduce a relevant class of stochastic processes.

\begin{definition}
    Let $p\in (1,\infty)$. We say that a stochastic process $\varphi$ with paths in  
    $L^\infty_{\omega, t}(L^1+L^\infty)$
    belongs to $\cU^p$ if for every $\eps>0$ there exist $q \in (p,\infty)$ and a decomposition $\varphi=\varphi^>+\varphi^<$ such that 
    \begin{align*}
        \| \varphi^{>}\|_{ L^\infty_{\omega, t}L^p} \le \eps, \qquad \| \varphi^{<}\|_{ L^\infty_{\omega, t}L^q}<\infty.
    \end{align*}
\end{definition}

\begin{theorem}[gSQG]\label{thm:main_gSQG}
	Let $\alpha \in (0,1)$, $\beta \in (0,1)$ and $p\in (1,\infty)$ satisfy
	\begin{equation}\label{eq:parameters_uniqueness}
        \alpha+\frac{\beta}{2}+\frac{1}{p}\leq 1,
        \qquad \alpha + \beta \le 1.
	\end{equation}
	Then for any $\theta_0\in L^p\cap \dot H^{-1}$, strong existence and pathwise uniqueness hold for \eqref{eq:SPDEs_Strat}, in the sense of \cref{def:solution}, in the class
	\begin{equation*}
		\theta\in L^\infty_{\omega,t} L ^p \cap L^2_{\omega, t} ( \dot H^{-1} \cap \dot H^{-\alpha} )\cap \cU^{p_\star}.
	\end{equation*}
\end{theorem}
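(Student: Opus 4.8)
We follow the by-now classical two-step route for regularisation-by-noise results: strong existence via a compactness/martingale argument on regularised equations, and pathwise uniqueness via an energy estimate for the difference of two solutions in the negative Sobolev norm $\dot H^{-1}$, exploiting the ``anomalous regularisation'' coercive term generated by the spatial roughness of $W$. Throughout write $\cK:=-\nabla^\perp\Lambda^{-2+\beta}$, a Fourier multiplier of order $\beta-1\in(-1,0)$, so that $u=\cK\theta$.

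\textbf{Existence.} Approximate \eqref{eq:SPDEs_Strat} by the SPDEs with mollified velocity $u^n=\cK(\rho_n\ast\theta^n)$ and spectrally truncated covariance $C_n\to C$, for which classical transport theory applies and $\theta^n$ is carried by a spatially smooth, divergence-free stochastic flow $\Phi^n$ with $\theta^n_0=\theta_0$, i.e.\ $\theta^n_t=\theta_0\circ(\Phi^n_t)^{-1}$. Since $\Phi^n$ is $\P$-a.s.\ measure preserving one has the uniform bounds $\|\theta^n\|_{L^\infty_{\omega,t}L^p}\le\|\theta_0\|_{L^p}$ and, for every $M>0$, $\|\theta^n_t\mathbbm{1}_{\{|\theta^n_t|>M\}}\|_{L^{p_\star}}=\|\theta_0\mathbbm{1}_{\{|\theta_0|>M\}}\|_{L^{p_\star}}$ (with an analogous identity in $L^q$); thus — trivially when $p>p_\star$, and using that $\theta_0\in L^{p_\star}$ lies in $\cU^{p_\star}$ together with transport of the decomposition when $p=p_\star$ — the $\cU^{p_\star}$-bound is uniform in $n,t,\omega$. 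The coercivity estimates of \cite{coghi2023existence}, available for the \emph{whole} range $\alpha\in(0,1)$ (they rely only on $Q(z)\sim|z|^{2\alpha}$), applied to $\theta^n$ give moreover
\[
\E\Big[\sup_{t\le T}\|\theta^n_t\|_{\dot H^{-1}}^2+\int_0^T\|\theta^n_t\|_{\dot H^{-\alpha}}^2\dd t\Big]\lesssim\|\theta_0\|_{\dot H^{-1}}^2 .
\]
Together with a bound on the time increments of $\theta^n$ read off the equation, these yield tightness of the laws of $(\theta^n,W^n)$ in a suitable path space; a Skorokhod representation and the usual identification of the limit — the nonlinear flux $u^n\,\theta^n$ passing to the limit thanks to the strong $t\times x$ compactness provided by the $\dot H^{-1}/\dot H^{-\alpha}$ bounds and an Aubin–Lions–Simon argument — produce a solution in $L^\infty_{\omega,t}L^p\cap L^2_{\omega,t}(\dot H^{-1}\cap\dot H^{-\alpha})$; the $\cU^{p_\star}$-bound, being a uniform-integrability statement, is stable under this convergence, so the limit belongs to $\cU^{p_\star}$ as well.

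\textbf{Uniqueness.} Given two solutions $\theta^1,\theta^2$ with equal data, set $\xi:=\theta^1-\theta^2$ and $v:=\cK\xi$; then $\dd\xi+u^1\cdot\nabla\xi\,\dd t+v\cdot\nabla\theta^2\,\dd t+\circ\dd W\cdot\nabla\xi=0$ with $\xi_0=0$. Apply the Itô formula to $t\mapsto\|\xi_t\|_{\dot H^{-1}}^2$ — first for a mollification of $\xi$, then pass to the limit via a DiPerna–Lions commutator argument using $\theta^i\in L^\infty_{\omega,t}L^p\cap L^2_{\omega,t}\dot H^{-1}$ — and take expectations. The martingale term vanishes, and the Stratonovich corrector of the $W$-transport, combined with the martingale bracket, produces (by the estimate of \cite{coghi2023existence}, valid for all $\alpha\in(0,1)$) the negative-definite drift $-c\,\E\|\xi_t\|_{\dot H^{-\alpha}}^2$, up to a harmless $+C\,\E\|\xi_t\|_{\dot H^{-1}}^2$. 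It remains to control the two bilinear terms, each of the form (velocity of one solution)$\times$(two factors of $\xi$): e.g.\ integrating by parts ($\div v=0$), $\langle v\cdot\nabla\theta^2,\Lambda^{-2}\xi\rangle=-\langle\theta^2 v,\nabla\Lambda^{-2}\xi\rangle$, which by Hölder ($1=\tfrac1p+\tfrac1a+\tfrac1b$) and Sobolev embedding in $\R^2$ is $\lesssim\|\theta^2\|_{L^p}\|\xi\|_{\dot H^{\beta-2/a}}\|\xi\|_{\dot H^{-2/b}}$; choosing $\tfrac1a=\tfrac{\alpha+\beta}2$, $\tfrac1b=\tfrac\alpha2$ — admissible precisely because $\alpha+\tfrac\beta2+\tfrac1p\le1$ (summability of the exponents) and $\alpha+\beta\le1$ (so $a,b\ge2$), i.e.\ by \eqref{eq:parameters_uniqueness} — gives $\lesssim\|\theta^2\|_{L^p}\|\xi\|_{\dot H^{-\alpha}}^2$; the term $\langle u^1\cdot\nabla\xi,\Lambda^{-2}\xi\rangle$ is treated the same way (also using $\xi\in L^p$). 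When $p>p_\star$ the exponent inequalities are strict, leaving room to interpolate the $\dot H^{-\alpha}$-norms with $\dot H^{-1}$; Young's inequality then yields $\tfrac c2\E\|\xi\|_{\dot H^{-\alpha}}^2+C(\|\theta_0\|_{L^p})\E\|\xi\|_{\dot H^{-1}}^2$, the first term is absorbed, and Grönwall forces $\xi\equiv0$. When $p=p_\star$ the embeddings are used at the endpoint, with no interpolation gain; here we invoke $\theta^i\in\cU^{p_\star}$: split $\theta^i=\theta^{i,>}+\theta^{i,<}$ with $\|\theta^{i,>}\|_{L^\infty_{\omega,t}L^{p_\star}}\le\eps$ and $\theta^{i,<}\in L^\infty_{\omega,t}L^q$ for some $q>p_\star$; the $\theta^{i,<}$-contributions fall into the subcritical case just treated, while the $\theta^{i,>}$-contributions are $\le\eps\,\E\|\xi\|_{\dot H^{-\alpha}}^2$, absorbed for $\eps<c/2$. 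Grönwall then closes the argument in all cases.

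\textbf{The main difficulty.} The crux is the uniqueness step at the scaling-critical exponent $p=p_\star$: there the nonlinear estimate is a genuine endpoint — non-compact Sobolev embeddings, no interpolation slack, and the coefficient $\|\theta^i\|_{L^{p_\star}}$ not small — so Grönwall cannot be closed from $\theta^i\in L^{p_\star}$ alone. This is exactly the gap bridged by the class $\cU^{p_\star}$, whose two decisive features are that it is preserved by the incompressible approximating flows (hence genuinely available for the constructed solutions) and that its ``rough part'' carries an arbitrarily small prefactor, absorbable by the noise-induced coercive term $-c\|\xi\|_{\dot H^{-\alpha}}^2$. A secondary but delicate technical point, inherent to the rough regime, is the rigorous justification of the Itô formula and of the Itô–Stratonovich corrector identity for solutions of the limited regularity above: one cannot bound the noise contributions term by term but must exploit the cancellation between the corrector and the martingale bracket, which requires a mollification/commutator analysis adapted to the spatially rough field $W$.
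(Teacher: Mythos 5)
Your estimate for the ``easy'' bilinear term is correct and is essentially a special case of the paper's \eqref{eq:tril1}: in $\langle v\cdot\nabla\theta^2,\Lambda^{-2}\xi\rangle=-\langle\theta^2 v,\nabla\Lambda^{-2}\xi\rangle$ the bare factor is $\theta^2\in L^p$ and both copies of $\xi$ sit under negative-order operators ($v=-\nabla^\perp\Lambda^{-2+\beta}\xi$ of order $\beta-1$ and $\nabla\Lambda^{-2}\xi$ of order $-1$), so H\"older plus Sobolev with $\tfrac1a=\tfrac{\alpha+\beta}{2}$, $\tfrac1b=\tfrac\alpha2$ indeed gives $\|\theta^2\|_{L^p}\|\xi\|_{\dot H^{-\alpha}}^2$ under \eqref{eq:parameters_uniqueness}. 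The genuine gap is your claim that $\langle u^1\cdot\nabla\xi,\Lambda^{-2}\xi\rangle$ ``is treated the same way (also using $\xi\in L^p$)''. It is not. After integrating by parts this term is $-\langle \xi,\, u^1\cdot\nabla\Lambda^{-2}\xi\rangle$, and here one copy of $\xi$ appears \emph{bare}. There is no Sobolev embedding of $\dot H^{-\alpha}$ or $\dot H^{-1}$ into a Lebesgue space, so the bare $\xi$ must be placed in $L^p$; your H\"older splitting then yields a bound of the form $\|\xi\|_{L^p}\|\theta^1\|_{\dot H^{-\alpha}}\|\xi\|_{\dot H^{-\alpha}}$, which is only \emph{linear} in the quantities $\|\xi\|_{\dot H^{-\alpha}},\|\xi\|_{\dot H^{-1}}$ controlled by the Gr\"onwall functional. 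After Young's inequality one is left with $C\,\|\xi\|_{L^p}^2\|\theta^1\|_{\dot H^{-\alpha}}^2$, where $\|\xi\|_{L^p}$ is bounded but neither small nor vanishing with $\|\xi\|_{\dot H^{-1}}$; Gr\"onwall then gives nothing. The only classical alternative is to pair the bare $\xi$ by duality in $\dot H^{-\alpha}$ against $\Lambda^{\alpha}(u^1\cdot\nabla\Lambda^{-2}\xi)$, and a fractional Leibniz count forces $(1-\alpha)+(-\alpha)\ge 0$, i.e.\ $\alpha\le 1/2$ --- exactly the restriction the theorem is meant to remove (this is the paper's \eqref{eq:intro_old_nonlinear} discussion). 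The missing idea is the momentum formulation \eqref{eq:gSQG_nonlin_momentum}, $\cN^\beta(\theta)=\nabla^\perp\cdot((u\cdot\nabla)h-(\nabla h)^T u)$ with $h=-\nabla^\perp\Lambda^{-2}\theta$: after pairing with $\Lambda^{-2}\xi$ every resulting trilinear term puts the zero-order Calder\'on--Zygmund operator $\nabla\nabla^\perp\Lambda^{-2}$ on the $L^p$ factor and distributes negative-order operators on both copies of $\xi$ (\cref{lem:trilinear_estimates}); without it the uniqueness step does not close for $\alpha>1/2$.

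Two secondary points. First, your existence argument (mollification, stochastic flows, Skorokhod) produces a probabilistically \emph{weak} solution; to get the claimed \emph{strong} existence you must either invoke a Yamada--Watanabe argument on top of pathwise uniqueness or, as the paper does, build the solution directly as a strong limit of vanishing-viscosity approximations (\cref{thm:strong_existence}), whose Cauchy property uses the very same trilinear estimates as the uniqueness proof. Second, passing the $\cU^{p_\star}$ decomposition to the limit is done in the paper by weak-$\ast$ compactness of the uniformly bounded pieces $\theta^{\nu,\gtrless}$; your appeal to ``stability of uniform integrability'' under the Skorokhod convergence would need a comparable justification.
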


Let us also point out that, in both \cref{thm:main_Euler,thm:main_gSQG}, the unique solutions are obtained as strong limits of vanishing viscosity schemes, cf.\ \cref{thm:strong_existence}.
A few remarks are in order.

\begin{remark}[The space $\cU^{p_\star}$]
    The first condition in \eqref{eq:parameters_uniqueness} amounts to $p\geq p_\star$, for $p_\star$ as defined in \eqref{eq:scaling_critical_exponent}. In particular, when $p>p_\star$, it is clear from the definition that $L^\infty_{\omega,t} L^p\cap \cU^{p_\star}= L^\infty_{\omega,t} L^p$; therefore condition $\theta\in \cU^{p_\star}$ is truly needed only in the critical case $\theta_0\in L^{p_\star}$.
    For $2$D Euler, assumption $\theta\in \cU^{p_\star}$ was not needed, thanks to special asymmetric cancellations of the nonlinearity in the balance of the $\dot H^{-1}$-norm, which allow to perform a weak-strong uniqueness argument. 
\end{remark}

\begin{remark}[Further classes of initial data]
    A closer look at the proofs reveals that condition $\theta\in L^2_{\omega, t} ( \dot H^{-1} \cap \dot H^{-\alpha} )\cap \cU^{p_\star}$ suffices to deduce pathwise uniqueness; in fact, the result generalizes to $\theta_0$ admitting a similar decomposition into a small $L^{p_\star}$-component and a large $L^q$-one. This is for instance the case for Lorentz spaces, namely $\theta_0\in L^{p,q}$ with $p>p_\star$ and $q\in [1,\infty]$.
\end{remark}

\begin{remark}[The presence of forcing]
    Similarly to \cite{BGM2025}, our well-posedness results still hold in the presence of a deterministic forcing $f\in L^1_t (L^p\cap \dot H^{-1})$ on the r.h.s.\ of the SPDE; we only omit it not to make the notation too burdensome.
    This should be compared to the deterministic case, where non-uniqueness scenarios à la Vishik arise for suitable chosen $f$, cf. \cite[Thm. 1.1]{castro2025unstable}.
\end{remark}

\begin{remark}[Scaling criticality]\label{rem:scaling}
    The space $L^{p_\star}$ can be regarded as critical for the SPDE due to the following (heuristic) scaling argument.
    Let $(\theta,W)$ solve \eqref{eq:SPDEs_Strat}; given $\lambda>0$, define
    \begin{equation}\label{eq:scaling}
        \tau:=\lambda^{2(1-\alpha)}t, \quad
        y:=\lambda x, \quad
        \theta^\lambda(t,x):=\lambda^{2(1-\alpha)-\beta} \theta(\tau,y), \quad
        W^\lambda(t,x):=\lambda^{-1} W(\tau,y);
    \end{equation}
    set $u^\lambda = -\nabla^\perp \Lambda^{-2+\beta}\theta$.
    A bit formally, scaling and Stratonovich calculus rules then yield
    \begin{align*}
        & \dot \theta^\lambda(t,x) = \lambda^{4(1-\alpha)-\beta} \dot \theta(\tau , y),\qquad  u^\lambda(t,x)\cdot\nabla \theta^\lambda(t,x)
        = \lambda^{4(1-\alpha)-\beta} u(\tau,y)\cdot\nabla \theta(\tau,y),\\
        & \nabla \theta^\lambda(t,x)\cdot \dot W^\lambda(t,x)
        = \lambda^{4(1-\alpha)-\beta} \nabla \theta(\tau,y)\cdot \dot W(\tau,y)
    \end{align*}
    where $\dot W=\circ \dd W$ encodes Stratonovich integration (rigorously one should perform time changes in integral form). In particular, $(\theta^\lambda,W^\lambda)$ still solve \eqref{eq:SPDEs_Strat}; the covariance associated to $W^\lambda$ is now given by
    \begin{align*}
        \E[W^\lambda_t(x)\otimes W^\lambda_s(y)]=(s\wedge t) C^\lambda(x-y), \quad C^\lambda(z):=\lambda^{-2\alpha} C(\lambda z)
    \end{align*}
    which can be described in Fourier by (cf. \eqref{eq:noise_covariance})
    \begin{align*}
        C^\lambda(z)= (2\pi)^{-1} \int_{\R^2} \frac{1}{(\lambda^2  + |k|^2)^{1+\alpha}}\, P^\perp_k\, e^{ik\cdot z} \dd k
    \end{align*}
    As $\lambda\to 0^+$, scaling \eqref{eq:scaling} corresponds to zooming in and looking at the small scales of $\theta$ (around $(t,x)=(0,0)$). Correspondingly, $W^\lambda$ asymptotically behaves as a statistically self-similar Kraichnan noise, since $\hat C^\lambda(k)\sim |k|^{-2-2\alpha} P^\perp_k$.
    Even though $C^\lambda(0)$ explodes as $\lambda\to 0^+$, performing similar computations as in e.g. \cite[App. A]{drivas2025anomalous}, it can be shown that $Q^\lambda\coloneqq C^\lambda(0)-C^\lambda$ stays bounded with
    \begin{align*}
        Q^\lambda(z)\sim |z|^{2\alpha} \bigg[P^\parallel_z + ( 1+2\alpha) P^\perp_z\bigg] \text{ as } \lambda\to 0^+;
    \end{align*}
    under this scaling, the regularizing effect of the noise (which ultimately is due to the behaviour of $Q$ around $0$, cf. \cite{zelati2023statistically,coghi2023existence,Rowan2023,drivas2025anomalous}) is stable as $\lambda\to 0^+$.
    We may therefore regard \eqref{eq:SPDEs_Strat} as scaling quasi-invariant under \eqref{eq:scaling}.
    In this perspective, critical spaces $E$ for the initial data $\theta_0$ should satisfy
    $\| \theta_0\|_E \sim \lambda^{2(1-\alpha)-\beta} \| \theta_0(\lambda\cdot)\|_E$ as $\lambda\to 0^+$;
    in $L^p$-scales, this yields $E=L^{p_\star}$.

    Note that scaling \eqref{eq:scaling} also leaves invariant the deterministic gSQG with fractional viscosity
    \begin{align*}
		\partial_t \theta + u\cdot\nabla \theta = - \Lambda^{2(1-\alpha)}\theta, \quad
		u = -\nabla^\perp \Lambda^{-2+\beta}\theta.
    \end{align*}
\end{remark}

\begin{remark}[Comparison with previous works]\label{rem:comparison}
    As mentioned, the well-posedness of \eqref{eq:SPDEs_Strat} has been previously studied in \cite{coghi2023existence,JiaoLuo2025_Euler,JiaoLuo2025_mSQG,BGM2025}.
    For $2$D Euler, \cite{coghi2023existence,JiaoLuo2025_Euler} established strong well-posedness under the assumption
    \begin{equation}\label{eq:previous_results_Euler}
        \theta_0\in L^1\cap L^p\cap \dot H^{-1}, \quad p\in (1,\infty), \quad 0<\alpha<\min\Big\{1-\frac{1}{p},\frac12\Big\};
    \end{equation}
    gSQG equations were instead analysed in \cite{JiaoLuo2025_mSQG,BGM2025}, \cite[Thm. 1.1]{BGM2025} required at least
    \begin{equation}\label{eq:previous_results_gSQG}
        \theta_0\in L^1\cap L^p\cap \dot H^{\beta-1}, \quad
        \alpha+\frac{\beta}{2}+\frac{1}{p}\leq 1,
        \quad
        0<\frac{\beta}{2}<\alpha<\frac{1}{2}.
    \end{equation}
    In both cases, one immediately sees that both the restriction $\alpha<1/2$ and the $L^1$-integrability requirement on $\theta_0$ are removed in \cref{thm:main_Euler,thm:main_gSQG}.
    Condition $\alpha+\beta\leq 1$ in \eqref{eq:parameters_uniqueness} is rather natural: as $\alpha$ gets smaller, the regularizing effect from $W$ gets stronger and so a wider range of $\beta$ should be allowed; instead the previous condition $0<\beta/2<\alpha$ from \eqref{eq:previous_results_gSQG}, for $\alpha\ll 1$, would force one to take $\beta\ll 1$ as well, contrary to what intuition would suggest.

    Condition \eqref{eq:parameters_uniqueness_Euler} is also in line with classical Yudovich theory: when $W$ gets regular (roughly corresponding to $\alpha\uparrow 1$), $\theta_0\in \dot H^{-1}\cap L^\infty$ is required, cf.\ \cite{BFM2016}.

    Let us point out however that there are regimes covered in \eqref{eq:previous_results_gSQG} which do not fall into \eqref{eq:parameters_uniqueness}, in particular when $\beta\sim 1$ and $\alpha\sim 1/2$. We conjecture it should be possible to weaken the last condition in \eqref{eq:parameters_uniqueness} to $\alpha+\beta/2<1$; we leave this question for future research. 
\end{remark}

\begin{remark}[Numerologies and phenomenologies of turbulence]\label{rem:numerologies}
    Two-dimensional turbulent fluids $v$ are expected to display only up to $1/3$-H\"older regularity, see \cite{BofEck2012}.
    On the other hand, the Kraichnan model recreates the Richardson-Kolmogorov scaling of turbulent energy cascade for $\alpha=2/3$ (cf.\ \cite{CFG2008}, p.23).
    For this choice of $\alpha$, for $2$D Euler ($\beta=0$), \cref{thm:main_Euler} provides global well-posedness with critical parameter $p_\star=3$; by Sobolev embeddings, if $\theta\in L^3\cap H^{-1}$, then $u\in C^{1/3}$.
    In particular, the overall velocity field driving Lagrangian particles in the SPDE \eqref{eq:SPDEs_Strat} is (formally) $v=u \dd t+ \circ \dd W$, where $u\in L^\infty_t W^{1,3}$ and $\dd W \in C^{-1/2-}_t C^{2/3-}$.
    In this sense, the SPDE \eqref{eq:SPDEs_Strat} can be interpreted as a simplified description of a multiscale turbulent fluid, admitting a decomposition into large, slow scales $u$ (which belong to $C^{1/3}$, but admit higher Sobolev $W^{1,3}$), and small, fast oscillating scales $\circ\dd W$ (belonging to $C^{2/3-}$ but with very poor time regularity).
\end{remark}

\subsection{Main ideas of proof}\label{subsec:ideas_proof}

Let us shortly present the key novel ideas behind \cref{thm:main_Euler,thm:main_gSQG}; we mostly focus on the pathwise uniqueness for stochastic $2$D Euler ($\beta=0$), highlighting how to overcome the restriction $\alpha<1/2$.

Let $\theta^1$, $\theta^2$ be two solutions to \eqref{eq:parameters_uniqueness_Euler} starting from the same initial datum $\theta_0$ and set $\xi \coloneqq \theta^1 - \theta^2$, $u\coloneqq u^1-u^2$; let $\cN(\theta^i)=u^i\cdot\nabla \theta^i$ denote the nonlinear terms.
Applying It\^o's formula to $\| \xi\|_{\dot H^{-1}}^2$ and taking expectation, possibly after some regularization procedure, one finds 
\begin{equation}\label{eq:intro_evolution_xi}
    \frac{\dd}{\dd t} \E \| \xi \|^2_{\dot H^{-1}}
    = - \E \langle \Lambda^{-2} \xi, \cN(\theta^1)-\cN(\theta^2) \rangle + \E \langle \trace [ Q D^2G] \ast \xi, \xi \rangle
\end{equation}
where $G$ is the Green kernel associated with the Laplacian and $Q$ is given by \eqref{eq:def_of_Q}; in particular, the last term in the balance \eqref{eq:intro_evolution_xi} comes from the contribution of the noise $W$.

Starting with \cite{coghi2023existence}, it has become clear that when looking at the balance of negative Sobolev norms, the contributions coming from a rough Kraichnan noise display coercive features, see also \cite{GGM2024,bagnara2024anomalous,BGM2025,JiaoLuo2025_Euler,crippa2025zero}.
In our setting, one can show that there exist $\kappa_1,\kappa_2>0$ such that
\begin{equation} \label{eq:intro_reg_estimate}
    \E \langle \trace [ Q D^2G] \ast \xi, \xi \rangle \le - \kappa_1 \E \|\xi\|^2_{\dot H^{-\alpha}} + \kappa_2 \E \| \xi\|^2_{\dot H^{-1}},
\end{equation}
see \cref{subsec:covariance} for more details.
Combining \eqref{eq:intro_reg_estimate} with \eqref{eq:intro_evolution_xi}, in order to deduce that $\xi\equiv 0$ by a Gr\"onwall-type argument, it then suffices for instance to prove that
\begin{equation}\label{eq:intro_desired_estimate}
\big|\E \left[ \langle \Lambda^{-2} \xi, \cN(\theta^1)-\cN(\theta^2) \rangle \right] \big|  \le \frac{\kappa_1}{2} \E \|\xi\|^2_{\dot H^{-\alpha}} +C \E\| \xi\|_{\dot H^{-1}}^2
\end{equation}
for some $C>0$; the problem is thus reduced to an analytic estimate for the nonlinearity $\cN$.

For $2$D Euler, rearranging terms, integrating by parts and exploiting cancellations coming from the specific structure of the nonlinearity, one finds
\begin{equation}\label{eq:intro_old_nonlinear}
    \langle \Lambda^{-2} \xi, \cN(\theta^1)-\cN(\theta^2) \rangle
    = \langle \Lambda^{-2} \xi, u^1 \cdot \nabla \xi + u \cdot \nabla \theta^2\rangle
    = -\langle u^1 \cdot \nabla \Lambda^{-2} \xi, \xi \rangle.
\end{equation}
In order to understand if an estimate like \eqref{eq:intro_desired_estimate} is feasible, starting from \eqref{eq:intro_old_nonlinear}, we can perform a regularity counting.
Noting that $\| \xi\|_{\dot H^{-\alpha}}$ controls $\| \nabla\Lambda^{-2} \xi\|_{\dot H^{1-\alpha}}$, even if $u^1$ were smooth, the best we can hope for is to control $\| u^1\cdot\nabla \Lambda^{-2} \xi\|_{\dot H^{1-\alpha}}$.
In order for the duality pairing on the r.h.s. of \eqref{eq:intro_old_nonlinear} to be appropriately bounded, one then needs at least
\begin{align*}
    (1-\alpha)+(-\alpha)\geq 0\qquad \Leftrightarrow\qquad \alpha\leq \frac12.
\end{align*}
The above argument can be made rigorous, performing duality estimates and fractional Leibniz rules, and ultimately leads to the restriction on $\alpha<1/2$ appearing in \cite{coghi2023existence,BGM2025,JiaoLuo2025_Euler,JiaoLuo2025_mSQG,JiaoLuo2025_Boussinesq}.

Our main observation is that an alternative representation of the nonlinearity allows to redistribute derivatives more optimally, improving the effectiveness of the fractional Leibniz rule; this yields \eqref{eq:intro_desired_estimate}, without passing through \eqref{eq:intro_old_nonlinear}.
As a consequence, we remove the artificial restriction $\alpha<1/2$ and only require condition \eqref{eq:parameters_uniqueness_Euler}, which is more structural (cf. \cref{rem:scaling}).

As explained in \cref{subsec:nonlinear_estimates}, for $2$D Euler we have the identity 
\begin{equation*}
    \cN(\theta)=u\cdot \nabla\theta = \nabla^\perp \cdot ((u\cdot \nabla) u),\quad u=-\nabla^\perp\Lambda^{-2}\theta, 
\end{equation*}
so that after some manipulations one finds
\begin{equation}\label{eq:intro_new_nonlinear}\begin{split}
    \langle \Lambda^{-2} \xi, \cN(\theta^1)-\cN(\theta^2) \rangle
    & = -\langle \nabla^\perp \Lambda^{-2}\xi, u^1\cdot\nabla u^1- u^2\cdot\nabla u^2 \rangle\\
    & = - \langle \nabla^\perp \Lambda^{-2} \xi, (\nabla^\perp \Lambda^{-2} \xi \cdot \nabla) \nabla^\perp \Lambda^{-2} \theta^1 \rangle.
\end{split}\end{equation}
Comparing \eqref{eq:intro_new_nonlinear} to the previous \eqref{eq:intro_old_nonlinear}, there is a crucial difference: the differential operator $\nabla\nabla^\perp \Lambda^{-2}$ applied to $\theta^1$ is of zero order (which allows to exploit a priori estimates on $\| \theta^1\|_{L^p}$) while the order of derivatives landing on the two $\xi$ is perfectly balanced.
Ultimately, the rewriting \eqref{eq:intro_new_nonlinear} allows to deduce \eqref{eq:intro_desired_estimate}, under the assumptions of \cref{thm:main_Euler}.

In the gSQG case, a similar strategy can be applied by rewriting the nonlinearity in its \emph{momentum formulation}, see \cref{subsec:nonlinear_estimates}.
In this case, we can still allow any $\alpha\in (0,1)$, but we need the constraint $\alpha+\beta \le 1$, which is more natural but likely not necessary (see discussion in \cref{rem:comparison}).
Let us point out that, compared to our previous \cite{BGM2025}, here the uniqueness argument is still based on the evolution of $\| \xi\|_{\dot H^{-1}}$ and not the Hamiltonian $\| \xi\|_{\dot H^{\beta/2-1}}$.

In both cases, once estimates of the form \eqref{eq:intro_desired_estimate} are in place, they also yield \emph{stability results}, which in fact can be used to construct directly strong solutions as unique limits of vanishing viscosity approximations. This step is performed in \cref{thm:strong_existence}.

\subsection{Structure of the paper}\label{subsec:structure}
We start \cref{sec:preliminaries} by introducing the notation and conventions used throughout the work and providing the rigorous notion of solution to the SPDE.
We then present the regularization properties induced by the rough Kraichnan noise in \cref{subsec:covariance} and subsequently prove the key functional estimates for the nonlinear terms in \cref{subsec:nonlinear_estimates}.
In \cref{sec:strong_existence}, we establish strong existence by showing strong convergence of vanishing viscosity approximations, while \cref{sec:uniqueness} is devoted to the proof of pathwise uniqueness. 
\cref{app:tech_lem} collects the proofs of several technical lemmas.

\section{Preliminaries}\label{sec:preliminaries}

\subsection{Notations and conventions}\label{subsec:notations}

Given $x,y \in \R^d$, we denote by $x\cdot y$ their scalar product; $|x|$ stands for the Euclidean norm, while $\langle x \rangle \coloneqq (1+|x|^2)^{1/2}$.
We use $x\otimes y$ to indicate the tensor product $x\otimes y$ between $x$ and $y$.  
When $A\in \R^{d\times d}$ is a matrix, $\trace A$ stands for its trace, $A^T$ for its transpose.
For $x\in\R^2$, we set $x^\perp\coloneqq (-x_2, x_1)^T$.

When dealing with inequalities, we write $a\lesssim b$ if there exists a constant $c>0$ such that $a \le cb$; to stress the dependence of the hidden constant on some parameter family $\lambda$, we will sometimes write $a \lesssim_\lambda b$.

We denote by $C^\infty_c(\R^d;\R^m)$ the space of smooth functions with compact support and by $\cS(\R^d;\R^m)$ the space of Schwartz functions; its dual space of tempered distributions is denoted by $\cS'(\R^d;\R^m)$.
For $p\in [1,\infty]$, we write $L^p (\R^d;\R^m)$ for the standard Lebesgue spaces.
For $k\in \N$ and $p\in[1,\infty]$, $W^{k,p}(\R^d;\R^m)$ stand for the classical Sobolev spaces.
In general, when clear from the context, we will drop from the notation both the domain and codomain of such functions, writing for instance $L^p$ instead of $L^p (\R^d;\R^m)$.
Letting $E$ denote any of the above Banach spaces, we denote by $E_\loc$ its localized version: $f\in E_\loc$ if $f \varphi\in E$ for any $\varphi\in C^\infty_c$.
We will use the bracket $\langle \cdot,\cdot\rangle$ to represent indifferently both the inner product in $L^2$ and the duality pairings between $\cS$ and $\cS'$.

Given functions (or distributions) $f,g:\R^d\to \R^d$, we denote by $\nabla f=(\partial_j f_i)_{i,j}$ the Jacobian matrix of $f$ and write $(f\cdot\nabla)g=\sum_i f_i\partial_i g$ (which coincides with the matrix-vector product $\nabla g f$). For a scalar function $\theta$, $\nabla \theta$ denotes its gradient; for $d=2$, $\nabla^\perp\coloneqq(-\partial_2, \partial_1)$ indicates the orthogonal gradient.

We indicate by $\cF(f)$, or $\hat f$, the Fourier transform of $f$, and by $\cF^{-1}$ its inverse.
We set $\Lambda = |\nabla| \coloneqq (-\Delta)^\frac12$, defined in Fourier space as $\Lambda f= \cF^{-1}(|k| \cF(f))$; similarly, for $s\in \R$, we indicate by $\Lambda^s$ its fractional powers.

For $s\in \R$, the inhomogeneous Sobolev space $H^s=H^s(\R^d, \R^m)$ is the space of tempered distributions $f$ such that $\hat f \in L^2_\loc$ and
\begin{equation*}
    \| f \|^2_{H^s} = \| (\id - \Delta)^{s/2} f \|_{L^2}^2 = \int_{\R^d} \langle k \rangle^{2s}|\widehat f (k)|^2 \dd k < \infty. 
\end{equation*} 
Similarly, homogeneous Sobolev space $\dot H^s=\dot H^s(\R^d, \R^m)$ are made of tempered distributions $f$ such that $\hat f \in L^1_\loc$ and
\begin{equation*}
    \| f \|^2_{\dot H^s} = \| \Lambda^s f\|_{L^2}^2= \int_{\R^d} |n |^{2s}|\widehat f (n)|^2 \dd n < \infty.
\end{equation*}

In this work, the time variable $t$ will always belong to the interval $[0,T]$, for an arbitrary large but finite $T>0$. We will use the subscripts $t\in [0,T]$ and $\omega\in \Omega$ to refer to functional spaces in the time and sample variables. 
For example, given a Banach space $E$, $L^q_t E$ denotes the space $L^q([0,T]; E)$ of $q$-integrable functions with values in $E$ (defined in the Bochner sense).
Similarly, for $\gamma \in (0,1)$, $C^\gamma_t E=C^\gamma([0,T];E)$ stands for the space of $\gamma$-H\"older functions with values in $E$, and $L^\infty_{\omega,t}L^p=L^\infty(\Omega\times [0,T];L^p(\R^d))$.
Given two Banach spaces $E$ and $F$, we endow their intersection $E\cap F$ with $\|\cdot\|_{E\cap F} = \|\cdot\|_E + \|\cdot \|_F$, which makes it Banach.

\subsection{SPDE in It\^o form}\label{subsec:SPDE_ito}

While the correct physical interpretation of the SPDE \eqref{eq:SPDEs_Strat} involves Stratonovich integration, in view of the Wong-Zakai principle, it is mathematically more convenient to convert it in It\^o form, by computing the It\^o-Stratonovich correction.
As the covariance $C$ defined by \eqref{eq:noise_covariance} is isotropic, there exists a constant $c_0>0$ such that $C(0)=c_0 I$, namely\ $c_0={\rm Tr}(C(0))/2$.
The It\^o formulation of \eqref{eq:SPDEs_Strat} has been already computed in the literature, see for instance \cite[Sec. 2.2]{BGM2025} or \cite[Rem. 2.10]{coghi2023existence}, and reads as
\begin{equation}\label{eq:SPDEs_Ito}\begin{cases}
		\dd \theta + u\cdot\nabla \theta \dd t +  \dd W\cdot\nabla \theta = \frac{c_0}2\Delta \theta ,\\
		u = -\nabla^\perp \Lambda^{-2+\beta}\theta.
	\end{cases}
\end{equation}
Rigorously proving the equivalence of \eqref{eq:SPDEs_Strat} and \eqref{eq:SPDEs_Ito} requires more regularity of $\theta$ or $C$ than the one available in our setting, see the discussion in \cite[Sec. 2.2]{GalLuo2023} for more details.
Therefore, as customary for inviscid SPDEs with transport noise, we will define weak solutions relying directly on the It\^o formulation \eqref{eq:SPDEs_Ito}. 

\begin{definition} \label{def:solution}
Let $\alpha \in (0,1)$, $\beta \in [0,1]$. We say that a tuple $(\Omega, \cF, \cF_t, \P, W, \theta)$ is \emph{probabilistically weak solution} to \eqref{eq:SPDEs_Strat} (or \eqref{eq:SPDEs_Ito}) associated to the initial condition $\theta_0 \in L^1_\loc$ if 
\begin{enumerate}
    \item [i)] $(\Omega, \cF, \cF_t, \P)$ is a filtered probability space satisfying the standard assumptions and $W$ is an $\F_t$-Wiener process with covariance \eqref{eq:noise_covariance};
    \item [ii)] $\theta : \Omega \times [0,T] \to L^1_\loc$ is a $\F_t$-progressively measurable process whose trajectories are $\P$-a.s. weakly continuous in the sense of distributions and such that $\P$-a.s. $\theta,\, u, \, u \theta\, \in L^1_t L^1_\loc$, where $u= -\nabla^\perp \Lambda^{-2+\beta} \theta$;
    \item[iii)] $\P$-a.s. $\theta \in L^2_t H^{-1-\alpha}$;
    \item [iv)] for every $\varphi \in C^\infty_c$, the following holds $\P$-a.s. 
    \begin{equation}\label{eq:def_of_solution}
        \langle \theta_t ,\varphi\rangle = \langle \theta_0, \varphi \rangle + \int_0^t \left[\langle u_s \theta_s , \nabla \varphi \rangle + \frac{c_0}{2}\langle \theta_s, \Delta \varphi \rangle \right] \dd s + \int_0^t \langle \theta_s \nabla \varphi, \dd W_s  \rangle\quad \forall t \in [0,T],
    \end{equation}
\end{enumerate}
We say that $\theta$ is a \emph{probabilistically strong solution} if it is adapted to $\cF^W$, the (standard augmentation of the) filtration generated by $W$. 
\end{definition}

\begin{remark}\label{rem:defn_solution}
    Under the assumptions in \cref{def:solution}, the integrals in \eqref{eq:def_of_solution} are well defined, continuous stochastic processes.
    This is immediate for deterministic integrals, by condition ii); instead thanks to iii) and \cite[Lem. 2.8]{GalLuo2025}, the process $M_t=\int_0^t \langle \theta_s \nabla \varphi, \dd W_s  \rangle$ is a continuous local martingale, with bracket
    $[M]_t\leq \int_0^t \| \theta_s \nabla\varphi\|_{H^{-1-\alpha}}^2 \dd s$.
\end{remark}

\begin{remark}\label{rem:defn_solution2}
    A practical sufficient condition to verify the spatial regularity assumptions from ii) and iii) above is to require that $\P$-a.s. $\theta \in L^2_t (L^p \cap H^{-\alpha})$ with
    \begin{equation}\label{eq:sufficient_cond_defn_solution}
        \frac{1}{r}\coloneqq \frac{\alpha+ \beta}{2} +\frac 1p \leq 1, \quad \alpha+\beta\leq 1.
    \end{equation}
    Indeed, by H\"older's inequality and Sobolev embeddings, we have the pathwise estimate
    \begin{align*}
        \int_0^T \| u_s \theta_s\|_{L^r} \dd s 
        & \le \int_0^T \| u_s\|_{L^{\frac{2}{\beta+\alpha}}} \|\theta_s \|_{L^p}  \dd s 
        \lesssim \int_0^T \| u_s\|_{H^{1-\alpha-\beta}} \|\theta_s \|_{L^p}  \dd s\\
        & \le \int_0^T \| \theta_s\|_{H^{-\alpha}} \|\theta_s \|_{L^p}  \dd s 
        \le  \|\theta\|^2_{L^2_t (L^p \cap H^{-\alpha})}.
    \end{align*}
    Note that condition \eqref{eq:sufficient_cond_defn_solution} is always implied by our main assumption \eqref{eq:parameters_uniqueness}.
\end{remark}

\subsection{Estimates for the noise covariance}\label{subsec:covariance}

As outlined in \cref{subsec:ideas_proof} and in particular balance \eqref{eq:intro_evolution_xi}, a crucial role in the pathwise uniqueness argument is played by the coercive nature of the term $\E \langle \trace [ Q D^2G] \ast \xi, \xi \rangle$; we collect here some useful facts concerning it.

Arguing as in \cite[Sec. 3]{GGM2024}, whenever $\xi$ is regular enough (say $\xi\in H^1$), one may rewrite this term via Fourier transform as
\begin{equation}\label{eq:key_identity_fourier}
    \langle \trace [ Q D^2G] \ast \xi, \xi \rangle = \int_{\R^2} F(n) |\hat\xi(n)|^2 \dd n
\end{equation}
where
\begin{equation}\label{eq:defn_F_fourier}
    F(n) \coloneq c \int_{\R^2} \langle n-k \rangle^{-2-2\alpha} |P^\perp_{n-k} n|^2 \left(|k|^{-2} - |n|^{-2} \right)\dd k, \quad P^\perp_k\coloneq \Big(I-\frac{k}{|k|}\otimes \frac{k}{|k|}\Big) \mathbbm{1}_{k\neq 0};
\end{equation}
in the above, $c$ is a dimensional constant.
Note that $F(n)$ is finite for every $n \neq 0$, since $|P^\perp_{n-k} n| = |P^\perp_{n-k} k| \le |n| \wedge |k|$.
$F$ satisfies the following fundamental estimate: there exist constants $\kappa_1$, $\kappa_2>0$ such that
\begin{equation}\label{eq:fundamental_estimate_covariance}
    F (n) \le - \kappa_1 |n|^{-2\alpha} + \kappa_2 |n|^{-2}\quad\forall\, n\in\R^d.
\end{equation}
This is a consequence of \cite[Lem. 4.3]{coghi2023existence} and the basic estimate $\langle n\rangle^{-2\alpha} \geq \delta |n|^{-2\alpha} - C_\delta |n|^{-2}$.

When $\xi$ is not regular enough, identity \eqref{eq:key_identity_fourier} is not rigorously justified anymore and one needs to argue as in \cite{coghi2023existence} by regular approximations.
Let $p_t$ denote the heat kernel, so that $\widehat p_t (n) = e^{-4 \pi^2 |n|^2t}$.
For $0<\delta<1$, we define a regular approximation $G^\delta$ of $G$ by
\begin{equation}\label{eq:defN_Gdelta}
	G^{\delta}(x)\coloneqq \int_{\delta}^{1/\delta}p_t(x) \dd t;
\end{equation}
in particular, its Fourier transform is given by
\begin{equation}\label{eq:def_varphi}
	\widehat{G}^\delta(n)=(2\pi\vert n\vert)^{-2}(e^{-4\pi^2\vert n\vert^2\delta}-e^{-4\pi^2\vert n\vert^2/\delta}) \eqqcolon (2\pi\vert n\vert)^{-2}\widehat \chi_\delta (n),
\end{equation}
where $\widehat\chi_\delta (n) \uparrow 1$ for every $n \in \R^2$ as $\delta \to 0$.
By definition \eqref{eq:defN_Gdelta}, it is easy to see that $G^\delta$ is a Schwartz function.

\begin{lemma} \label{lem:anom_reg}
	It holds
	\begin{equation}\label{eq:anom_reg_delta}
		\left| \F \left( \trace [Q D^2G^\delta] \right)(n) \right| \lesssim |n|^{-2\alpha} + |n|^{-2}
	\end{equation}
    uniformly in $\delta\in (0,1)$.
\end{lemma}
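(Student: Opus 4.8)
The plan is to pass to Fourier variables and reduce to a weighted integral estimate closely parallel to the one behind \eqref{eq:fundamental_estimate_covariance}, but now carrying along the cut-off $\widehat\chi_\delta$ and insisting on uniformity in $\delta$. Since $G^\delta$ is a Schwartz function and $Q=C(0)-C$ is bounded and continuous, $\trace[QD^2G^\delta]\in L^1$ with bounded continuous Fourier transform, and repeating verbatim the computation leading to \eqref{eq:key_identity_fourier}--\eqref{eq:defn_F_fourier} (as in \cite[Sec. 3]{GGM2024}), with $\widehat G^\delta(n)=(2\pi|n|)^{-2}\widehat\chi_\delta(n)$ from \eqref{eq:def_varphi} replacing $\widehat G(n)=(2\pi|n|)^{-2}$, and using $C(0)=c_0 I$ with $c_0$ a dimensional multiple of $\int_{\R^2}\langle k\rangle^{-2-2\alpha}\dd k$, one obtains for $n\neq0$
\begin{equation*}
    \F(\trace[QD^2G^\delta])(n)=c\int_{\R^2}\langle n-k\rangle^{-2-2\alpha}\,|P^\perp_{n-k}n|^2\Big(\frac{\widehat\chi_\delta(k)}{|k|^2}-\frac{\widehat\chi_\delta(n)}{|n|^2}\Big)\dd k,
\end{equation*}
the integral converging absolutely by $|P^\perp_{n-k}n|=|P^\perp_{n-k}k|\le|n|\wedge|k|$ (and reducing to \eqref{eq:defn_F_fourier} when $\widehat\chi_\delta\equiv1$). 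I will also record three elementary $\delta$-uniform facts: $0\le\widehat\chi_\delta\le1$; $|\nabla\widehat\chi_\delta(n)|\lesssim|n|^{-1}$; and, setting $H_\delta(m):=\widehat\chi_\delta(m)|m|^{-2}$ (so the bracket above is $H_\delta(k)-H_\delta(n)$), $|D^2H_\delta(m)|\lesssim|m|^{-4}$ for $|m|\ge1$, all following from $|x|^j e^{-cx^2}\lesssim_j1$ together with the observation that for $|m|\ge1>\sqrt\delta$ the factor $e^{-4\pi^2|m|^2/\delta}$ and its derivatives are super-polynomially small while $D^2$ of $e^{-4\pi^2|m|^2\delta}$ is $O(\delta)=O(1)$, so no inverse power of $\delta$ survives.

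For $|n|\le2$ the bound is soft: using $|H_\delta(k)-H_\delta(n)|\le|k|^{-2}+|n|^{-2}$ and splitting the $k$-integral into $\{|k|\le1\}$ (where $|P^\perp_{n-k}n|^2|k|^{-2}\le1$) and $\{|k|>1\}$ (where $|P^\perp_{n-k}n|^2\le|n|^2$ and $\langle n-k\rangle^{-2-2\alpha}$ is integrable) gives $|\F(\trace[QD^2G^\delta])(n)|\lesssim1\lesssim|n|^{-2\alpha}+|n|^{-2}$. For $|n|\ge2$ the target is $|\F(\trace[QD^2G^\delta])(n)|\lesssim|n|^{-2\alpha}$, and I split $\R^2=\{|n-k|>|n|/2\}\cup\{|n-k|\le|n|/2\}$. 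On the first ("away from diagonal") region one has $\langle n-k\rangle^{-2-2\alpha}\lesssim|n|^{-2-2\alpha}$; combining this with $|H_\delta(k)-H_\delta(n)|\le|k|^{-2}+|n|^{-2}$ and $|P^\perp_{n-k}n|\le|n|\wedge|k|$, a routine case distinction ($|k|\lesssim|n|$, $|k|\sim|n|$, and $|k|\gg|n|$ — the last using $|n-k|\ge|k|/2$ hence $\langle n-k\rangle^{-2-2\alpha}\lesssim|k|^{-2-2\alpha}$) shows this region contributes $\lesssim|n|^{-2\alpha}$.

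The substantive part is the near-diagonal region $D:=\{|n-k|\le|n|/2\}$, which is symmetric under $k\mapsto2n-k$, i.e. $m:=n-k\mapsto-m$. There I Taylor expand $H_\delta(k)-H_\delta(n)=\nabla H_\delta(n)\cdot(k-n)+R(n,k)$ with $|R(n,k)|\lesssim|n-k|^2\sup_{[n,k]}|D^2H_\delta|\lesssim|n-k|^2|n|^{-4}$, since the segment $[n,k]$ stays in $\{|m|\ge|n|/2\ge1\}$ and the $\delta$-uniform Hessian bound applies. The first-order term, multiplied by the two factors $\langle n-k\rangle^{-2-2\alpha}$ and $|P^\perp_{n-k}n|^2=|P^\perp_{n-k}k|^2$ — both even in $n-k$ — is odd in $n-k$ and hence integrates to zero over the symmetric set $D$. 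The remainder contributes
\begin{equation*}
    \lesssim|n|^{-4}\int_{|m|\le|n|/2}\langle m\rangle^{-2-2\alpha}|P^\perp_m n|^2|m|^2\dd m\lesssim|n|^{-2}\int_{|m|\le|n|/2}\langle m\rangle^{-2-2\alpha}|m|^2\dd m\lesssim|n|^{-2\alpha},
\end{equation*}
using $\int_{|m|\le R}\langle m\rangle^{-2-2\alpha}|m|^2\dd m\lesssim R^{2-2\alpha}$ for $R\ge1$, valid precisely because $\alpha<1$. Collecting the two regions yields the claim for $|n|\ge2$, which together with the $|n|\le2$ case completes the proof.

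The main obstacle is exactly the region $D$: the crude bound on $H_\delta(k)-H_\delta(n)$ there produces only $O(1)$, not $O(|n|^{-2\alpha})$, and a first-order expansion alone still falls short for $\alpha\ge1/2$; one genuinely needs the parity cancellation of the linear term together with the $\delta$-uniform second-order control of $H_\delta$ from Step 1, whose verification is the single place where the precise shape of the cut-off $\widehat\chi_\delta$ — and in particular the separation $|m|\ge1>\sqrt\delta$ — is exploited. Everything else is routine Schur-type bookkeeping.
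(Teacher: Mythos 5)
Your proof is correct, but it takes a genuinely different route from the paper. The paper's proof is a three-line reduction to the literature: it splits $\trace[QD^2G^\delta]$ in \emph{physical} space using a radial cut-off $\varphi$, writes it as $A+R_2^\delta+R_3^\delta$, and then quotes \cite[Lem.\ 4.3, Lem.\ 4.5]{coghi2023existence} and \cite[Lem.\ 3.1]{JiaoLuo2025_Euler} for the Fourier bounds on the three pieces. You instead work entirely on the Fourier side with the explicit kernel $F^\delta$ (the same formula the paper itself writes down later, in the proof of \cref{cor:reg_term_convergence}), and prove the bound from scratch: trivial bound for $|n|\le 2$, crude Schur-type bookkeeping off the diagonal, and on the near-diagonal ball $\{|n-k|\le|n|/2\}$ a first-order Taylor expansion of $H_\delta(m)=\widehat\chi_\delta(m)|m|^{-2}$ whose linear term vanishes by parity over the symmetric domain, leaving a remainder controlled by the $\delta$-uniform Hessian bound $|D^2H_\delta(m)|\lesssim|m|^{-4}$ for $|m|\ge1$. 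I checked the two points where this could go wrong and both hold: the Hessian bound is indeed uniform in $\delta\in(0,1)$ (the factors $\delta^j|m|^{2j}e^{-c\delta|m|^2}$ are bounded and the $e^{-c|m|^2/\delta}$ contribution is super-polynomially small for $|m|\ge1>\sqrt\delta$), and the remainder integral $\int_{|m|\le|n|/2}\langle m\rangle^{-2-2\alpha}|m|^2\dd m\lesssim|n|^{2-2\alpha}$ uses exactly $\alpha<1$. What your approach buys is a self-contained proof that does not outsource the hard estimates to \cite{coghi2023existence,JiaoLuo2025_Euler}, and it isolates the actual cancellation mechanism (parity of the linear term near the diagonal) rather than hiding it in a cited lemma; what it does \emph{not} recover is the signed, coercive lower bound \eqref{eq:fundamental_estimate_covariance} on $F$ itself, which the paper still needs from \cite[Lem.\ 4.3]{coghi2023existence} and which a pure magnitude estimate like yours cannot give. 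One cosmetic remark: the stated fact $|\nabla\widehat\chi_\delta(n)|\lesssim|n|^{-1}$ is never used; what you actually need (and prove) is the bound on $D^2H_\delta$, plus integrability of the linear term to justify the symmetrization, which follows from $|\nabla H_\delta(n)|\lesssim|n|^{-3}$ by the same computation.
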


\begin{corollary} \label{cor:reg_term_convergence}
    Let $\xi \in L^2_{\omega, t} ( \dot H^{-1} \cap \dot H^{-\alpha})$; then $\P$-a.s. it holds that 
    \begin{equation}\label{eq:regularization_convergence}\begin{split}
        \lim_{\delta\downarrow 0} \int_0^t \trace [ Q D^2G^\delta] \ast \xi_s, \xi_s \rangle \dd s
        &  = \int_0^t \int_{\R^2}  F(n) | \widehat \xi_s(n)|^2 \dd n \dd s\\
        & \leq - \kappa_1 \int_0^t \| \xi_s\|_{\dot H^{-\alpha}}^2 \dd s + \kappa_2 \int_0^t \| \xi_s\|_{\dot H^{-1}}^2 \dd s
    \end{split}\end{equation}
    where $F$ is defined as in \eqref{eq:defn_F_fourier} and convergence in \eqref{eq:regularization_convergence} holds uniformly in $t\in [0,T]$.
\end{corollary}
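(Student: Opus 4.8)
The plan is to reduce, for a fixed realization and a.e.\ time $s$, the regularized bilinear form $\langle \trace[QD^2G^\delta]\ast\xi_s,\xi_s\rangle$ to a Fourier multiplier integral, pass to the limit $\delta\downarrow 0$ inside it by dominated convergence, then integrate in $s$ and pass to the limit once more, extracting uniformity in $t$ for free.

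First I would make the Fourier reduction. Since $G^\delta\in\cS$ and $Q$ is bounded, the kernel $\trace[QD^2G^\delta]$ lies in $L^1$, its convolution with $\xi_s\in\dot H^{-1}\cap\dot H^{-\alpha}$ is a well-defined tempered distribution in $\dot H^{-1}\cap\dot H^{-\alpha}$, and Plancherel gives $\langle \trace[QD^2G^\delta]\ast\xi_s,\xi_s\rangle=\int_{\R^2}F^\delta(n)\,|\widehat\xi_s(n)|^2\dd n$ with $F^\delta:=\F(\trace[QD^2G^\delta])$. Repeating the computation behind \eqref{eq:key_identity_fourier}--\eqref{eq:defn_F_fourier} with $\widehat G^\delta$ in place of $\widehat G$, so that the cutoff $\widehat\chi_\delta$ of \eqref{eq:def_varphi} enters, one obtains the explicit expression
\[
 F^\delta(n)=c\int_{\R^2}\langle n-k\rangle^{-2-2\alpha}\,|P^\perp_{n-k}n|^2\,\bigl(\widehat\chi_\delta(k)\,|k|^{-2}-\widehat\chi_\delta(n)\,|n|^{-2}\bigr)\dd k ,
\]
the ``diagonal'' term $\widehat\chi_\delta(n)\,|n|^{-2}$ arising from $\trace[C(0)D^2G^\delta]=c_0\Delta G^\delta$ together with the isotropy identity $c_0=c\,|n|^{-2}\int_{\R^2}\langle n-k\rangle^{-2-2\alpha}|P^\perp_{n-k}n|^2\dd k$ (valid for all $n\neq 0$, checked by passing to polar coordinates). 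By \cref{lem:anom_reg}, $|F^\delta(n)|\lesssim|n|^{-2\alpha}+|n|^{-2}$ uniformly in $\delta\in(0,1)$, and since $|n|^{-2\alpha}\le|n|^{-2}$ for $|n|\le1$ (the reverse for $|n|\ge1$) the bound $\int_{\R^2}(|n|^{-2\alpha}+|n|^{-2})|\widehat\xi_s(n)|^2\dd n\lesssim\|\xi_s\|_{\dot H^{-1}}^2+\|\xi_s\|_{\dot H^{-\alpha}}^2$ holds; in particular the multiplier representation above is absolutely convergent.

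Next I would pass to the limit. For fixed $n\neq 0$, since $0\le\widehat\chi_\delta\le1$ and $\widehat\chi_\delta\uparrow1$ pointwise, dominated convergence in the $k$-integral (with $\dd k$-integrable majorant $\langle n-k\rangle^{-2-2\alpha}|P^\perp_{n-k}n|^2(|k|^{-2}+|n|^{-2})$, using $|P^\perp_{n-k}n|\le|n|\wedge|k|$) gives $F^\delta(n)\to F(n)$, with $F$ exactly as in \eqref{eq:defn_F_fourier}. Working $\P$-a.s.\ on the full-measure event $\{\xi\in L^2_t(\dot H^{-1}\cap\dot H^{-\alpha})\}$, for a.e.\ $s$ the function $g_s(n):=(|n|^{-2\alpha}+|n|^{-2})|\widehat\xi_s(n)|^2$ is in $L^1(\R^2)$ and, by \cref{lem:anom_reg}, dominates $|F^\delta(n)|\,|\widehat\xi_s(n)|^2$ uniformly in $\delta$; dominated convergence in $n$ then yields $\langle \trace[QD^2G^\delta]\ast\xi_s,\xi_s\rangle\to\int_{\R^2}F(n)|\widehat\xi_s(n)|^2\dd n$, and the fundamental estimate \eqref{eq:fundamental_estimate_covariance} bounds this limit by $-\kappa_1\|\xi_s\|_{\dot H^{-\alpha}}^2+\kappa_2\|\xi_s\|_{\dot H^{-1}}^2$.

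Finally, to integrate in $s$, I would note that $|\langle \trace[QD^2G^\delta]\ast\xi_s,\xi_s\rangle|\le\|g_s\|_{L^1}\lesssim\|\xi_s\|_{\dot H^{-1}}^2+\|\xi_s\|_{\dot H^{-\alpha}}^2$ uniformly in $\delta$, and that $s\mapsto\|\xi_s\|_{\dot H^{-1}}^2+\|\xi_s\|_{\dot H^{-\alpha}}^2$ lies in $L^1([0,T])$ $\P$-a.s.; a third application of dominated convergence then gives the convergence in \eqref{eq:regularization_convergence} for every $t$, and since the error is bounded in modulus by $\int_0^T\bigl|\langle \trace[QD^2G^\delta]\ast\xi_s,\xi_s\rangle-\int_{\R^2}F(n)|\widehat\xi_s(n)|^2\dd n\bigr|\dd s$, which tends to $0$ independently of $t$, the convergence is uniform in $t\in[0,T]$; the inequality in \eqref{eq:regularization_convergence} follows by integrating the previous bound over $[0,t]$. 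I expect the only genuinely non-mechanical point to be the identification of the limiting multiplier with exactly the $F$ of \eqref{eq:defn_F_fourier} --- i.e.\ the cancellation of the diagonal contribution $c_0\Delta G^\delta$ against the integral, which hinges on the isotropy identity above --- after which everything is three applications of dominated convergence, in $k$, in $n$, and in $s$.
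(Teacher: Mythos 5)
Your argument is correct and follows essentially the same route as the paper's proof: Fourier representation of the regularized term via $F^\delta$, pointwise convergence $F^\delta(n)\to F(n)$ by dominated convergence in $k$, the uniform-in-$\delta$ majorant from \cref{lem:anom_reg}, a further dominated convergence using $\|\xi\|_{L^2_t(\dot H^{-1}\cap\dot H^{-\alpha})}<\infty$ $\P$-a.s., and finally \eqref{eq:fundamental_estimate_covariance} for the inequality. The only cosmetic differences are that you iterate the last dominated convergence (first in $n$ for a.e.\ $s$, then in $s$) where the paper applies it jointly in $(s,n)$, and that you spell out the isotropy identity behind the diagonal term of $F^\delta$, which the paper delegates to the cited computation in \cite[Sec. 3]{GGM2024}.
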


The proofs of \cref{lem:anom_reg} and \cref{cor:reg_term_convergence} are postponed to \cref{app:tech_lem}.

\subsection{Functional estimates for the nonlinearity} \label{subsec:nonlinear_estimates}

The next main ingredient in the proofs of \cref{thm:main_Euler,thm:main_gSQG} are analytic estimates for the nonlinear terms, employing negative Sobolev norms. 
To obtain them, we first need to introduce an alternative formulation of the nonlinearity.
For $\beta\in [0,1]$, let
\begin{equation}\label{eq:shortcut_nonlinearity}
    \cN^\beta(\theta):= u\cdot \nabla \theta = -\nabla^\perp \Lambda^{-2+\beta} \theta\cdot\nabla \theta.
\end{equation}
Recalling that the vorticity formulation of $2$D Euler is obtained from the velocity one by applying $\nabla^\perp \cdot$ on both sides of the PDE, one has
\begin{equation}\label{eq:Euler_nonlin_momentum}
    \cN^0(\theta)=u \cdot \nabla \theta = \nabla^\perp \cdot ((u \cdot \nabla) u).
\end{equation}
A similar point of view can be adopted for the gSQG equations, leading to their \emph{momentum formulation}, see for instance \cite[Sec. 2.1]{BuShVi2019} and \cite[Sec. 2.1]{zhao2024onsager}.
For any $\beta\in [0,1]$, one has
\begin{equation}\label{eq:gSQG_nonlin_momentum}
    \cN^\beta(\theta)=u \cdot \nabla \theta = \nabla^\perp \cdot ((u \cdot \nabla) h - (\nabla h)^T u), \quad
    u = -\nabla^\perp \Lambda^{-2+\beta} \theta, \quad
    h \coloneqq - \nabla^\perp \Lambda^{-2} \theta.
\end{equation}
Note that \eqref{eq:gSQG_nonlin_momentum} is consistent with \eqref{eq:Euler_nonlin_momentum}:
in the Euler case, $\beta = 0$, we have $u=h$ and the cancellation 
\begin{equation*}
    \nabla^\perp \cdot ( (\nabla u)^T u ) =\frac 12  \nabla^\perp \cdot (  \nabla |u|^2 ) =0.
\end{equation*}

Given two solutions $\theta^1$, $\theta^2$ to \eqref{eq:SPDEs_Strat}, setting $\xi=\theta^1-\theta^2$, the evolution of $\E \| \xi\|_{\dot H^{-1}}^2$  (cf. \eqref{eq:intro_evolution_xi}) gives rise to the term $\langle \Lambda^{-2} \xi, \mathcal{N}^\beta(\theta^1)-\cN^\beta(\theta^2)\rangle$.
This term can be split into further pieces by applying \eqref{eq:gSQG_nonlin_momentum} and integrating by parts (see the upcoming proof of \cref{thm:strong_existence} for more details); they can be estimated separately and corresponds to the l.h.s.\ in \eqref{eq:tril1}, \eqref{eq:tril2} and \eqref{eq:tril3} below.

\begin{lemma} \label{lem:trilinear_estimates}
	Let $\alpha \in (0,1)$, $\beta \in [0,1)$, $p\in (1,\infty)$ satisfying
	\begin{equation*}
		\alpha + \beta \leq 1, \qquad \alpha + \frac{\beta}{2} \le 1 - \frac{1}{p}.
	\end{equation*}
	Let $\xi,\varphi \in \dot H^{-\alpha}\cap \dot H^{-1}$, $\theta\in L^p$, then there exists $r_1, r_2 \in [0,1)$ such that
    \begin{subequations}
    \begin{align}
		&\left| \langle \nabla^\perp \Lambda^{-2} \xi,  (\nabla^\perp \Lambda^{-2+\beta} \varphi \cdot \nabla) \nabla^\perp  \Lambda^{-2} \theta   \rangle \right| \lesssim \| \xi \|_{\dot H^{-\alpha}}^{1-r_1} \| \xi \|_{\dot H^{-1}}^{r_1} \| \varphi \|_{\dot H^{-\alpha}}^{1-{r_2}} \| \varphi \|_{\dot H^{-1}}^{r_2}\| \theta \|_{L^p}, \label{eq:tril1}\\
        &\left| \langle \nabla^\perp \Lambda^{-2} \xi,  ( \nabla \nabla^\perp  \Lambda^{-2} \theta)^T \nabla^\perp \Lambda^{-2+\beta} \varphi    \rangle \right| \lesssim \| \xi \|_{\dot H^{-\alpha}}^{1-r_1} \| \xi \|_{\dot H^{-1}}^{r_1} \| \varphi \|_{\dot H^{-\alpha}}^{1-{r_2}} \| \varphi \|_{\dot H^{-1}}^{r_2}\| \theta \|_{L^p},\label{eq:tril2}\\
        &\left| \langle \nabla^\perp \Lambda^{-2} \xi,  ( \nabla \nabla^\perp  \Lambda^{-2} \xi)^T \nabla^\perp \Lambda^{-2+\beta} \theta    \rangle \right| \lesssim \| \xi \|_{\dot H^{-\alpha}}^{2-r_1-r_2} \| \xi \|_{\dot H^{-1}}^{r_1+r_2} \| \theta \|_{L^p}.\label{eq:tril3}
	\end{align}
    \end{subequations}
    In particular, when $\alpha + \beta/2 = 1 - 1/p$ (equivalently $p=p_\star$, the latter being defined by \eqref{eq:scaling_critical_exponent}), we have $r_1=r_2=0$, while for $p>p_\star$ we have $r_1+r_2>0$.
\end{lemma}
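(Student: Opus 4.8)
The plan is to prove all three bounds first for Schwartz data $\xi,\varphi,\theta$ --- where the integrations by parts below are classical --- and then extend to $\xi,\varphi\in\dot H^{-\alpha}\cap\dot H^{-1}$, $\theta\in L^p$ by density, reading the left-hand sides when necessary through the identities obtained along the way. I will use repeatedly: that order-zero singular integrals such as $\nabla\nabla^\perp\Lambda^{-2}$ and $\nabla^\perp\Lambda^{-1}$ are bounded on $L^p$ (for $1<p<\infty$) and on every $\dot H^s$; that $\Lambda^{-s}$ is an isometry of $L^p$ onto $\dot W^{s,p}$; the homogeneous Sobolev embeddings $\dot H^\sigma\hookrightarrow\dot W^{s,q}$ for $0\le s\le\sigma<1$ with $\tfrac1q=\tfrac12-\tfrac{\sigma-s}{2}$; the interpolation inequality $\|f\|_{\dot H^{(1-r)s_0+rs_1}}\le\|f\|_{\dot H^{s_0}}^{1-r}\|f\|_{\dot H^{s_1}}^{r}$; and a fractional Leibniz rule $\|fg\|_{\dot W^{s,\rho}}\lesssim\|f\|_{\dot W^{s,\rho_1}}\|g\|_{L^{\rho_2}}+\|f\|_{L^{\rho_3}}\|g\|_{\dot W^{s,\rho_4}}$. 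A single pair $(r_1,r_2)$ will serve all three estimates: it is picked with $r_1\in[0,1)$, $r_2\in\big[0,\tfrac{1-\alpha-\beta}{1-\alpha}\big]$ and $r_1+r_2=\tfrac{2}{1-\alpha}\big(1-\alpha-\tfrac{\beta}{2}-\tfrac1p\big)$; such a choice exists precisely because $\alpha+\tfrac{\beta}{2}+\tfrac1p\le1$ makes the sum nonnegative and $\alpha+\beta\le1$ makes the interval for $r_2$ nondegenerate, and it forces $r_1=r_2=0$ exactly when $p=p_\star$.

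For \eqref{eq:tril1}, write the integrand as $A_iB_j\,\partial_jC_i$ with $A=\nabla^\perp\Lambda^{-2}\xi$, $B=\nabla^\perp\Lambda^{-2+\beta}\varphi$, $C=\nabla^\perp\Lambda^{-2}\theta$; since $\partial_jC_i$ is a zeroth-order singular integral of $\theta$, Hölder's inequality with exponent $p$ peels off $\|\theta\|_{L^p}$ and reduces matters to $\|AB\|_{L^{p'}}\le\|A\|_{L^{a}}\|B\|_{L^{b}}$ with $\tfrac1a+\tfrac1b=\tfrac1{p'}$. Interpolating $\xi$ and $\varphi$ between $\dot H^{-\alpha}$ and $\dot H^{-1}$ puts $A\in\dot H^{\sigma_1}$ and $B\in\dot H^{\sigma_2}$ with $\sigma_1=(1-r_1)(1-\alpha)$ and $\sigma_2=(1-r_2)(1-\alpha)-\beta$; the Sobolev embeddings then demand exactly $\sigma_1,\sigma_2\ge0$ (equivalently $r_2\le\tfrac{1-\alpha-\beta}{1-\alpha}$, which is where $\alpha+\beta\le1$ enters) and $\sigma_1+\sigma_2=2/p$ (equivalently the stated value of $r_1+r_2$, which is where $\alpha+\tfrac{\beta}{2}+\tfrac1p\le1$ enters). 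Inequality \eqref{eq:tril2} follows identically, with $\nabla\nabla^\perp\Lambda^{-2}\theta\in L^p$ now playing the role of the zeroth-order factor and $\nabla^\perp\Lambda^{-2}\xi$, $\nabla^\perp\Lambda^{-2+\beta}\varphi$ the remaining two.

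Inequality \eqref{eq:tril3} is the crux, since there the only $\theta$-free factor bearing two derivatives, $\nabla\nabla^\perp\Lambda^{-2}\xi$, lies merely in $\dot H^{-\alpha}$, so no naive Hölder argument works once $\alpha\ge1/2$ --- this is exactly the obstruction that the condition $\alpha<1/2$ was sidestepping in earlier works. The remedy is to integrate by parts: with $A=\nabla^\perp\Lambda^{-2}\xi$ and $v:=\nabla^\perp\Lambda^{-2+\beta}\theta$ one first recognizes the left-hand side as $\int_{\R^2}\big[(A\cdot\nabla)A\big]\cdot v\,\dd x$, and then, invoking the planar identity $(A\cdot\nabla)A=\nabla\tfrac12|A|^2+(\nabla^\perp\!\cdot A)A^\perp$ together with $\nabla^\perp\!\cdot A=-\xi$ and $\div v=0$, a second integration by parts gives
\begin{equation*}
    \langle \nabla^\perp \Lambda^{-2} \xi,\,( \nabla \nabla^\perp  \Lambda^{-2} \xi)^T \nabla^\perp \Lambda^{-2+\beta} \theta \rangle
    = \int_{\R^2}\big(\nabla\Lambda^{-2}\xi\otimes\nabla\Lambda^{-2}\xi\big):\nabla\nabla^\perp\Lambda^{-2+\beta}\theta\,\dd x .
\end{equation*}
Now the derivative count is balanced: $\nabla\nabla^\perp\Lambda^{-2+\beta}\theta=\Lambda^{\beta}(\text{CZ of }\theta)\in\dot W^{-\beta,p}$ with norm $\lesssim\|\theta\|_{L^p}$, while $\nabla\Lambda^{-2}\xi$ has $\dot H^\sigma$-norm equal to $\|\xi\|_{\dot H^{\sigma-1}}$. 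Pairing $\dot W^{-\beta,p}$ against $\dot W^{\beta,p'}$ and applying the fractional Leibniz rule to $\nabla\Lambda^{-2}\xi\cdot\nabla\Lambda^{-2}\xi$, one routes (after interpolation) one copy of $\nabla\Lambda^{-2}\xi$ into $\dot W^{\beta,a}$ and the other into $L^{b}$, $\tfrac1a+\tfrac1b=\tfrac1{p'}$; assigning the interpolation parameter $r_2$ (which, being $\le\tfrac{1-\alpha-\beta}{1-\alpha}$, satisfies $(1-r_2)(1-\alpha)\ge\beta$) to the copy landing in $\dot W^{\beta,a}$ legitimizes the embedding $\dot H^{(1-r_2)(1-\alpha)}\hookrightarrow\dot W^{\beta,a}$ exactly thanks to $\alpha+\beta\le1$, and matching the Lebesgue exponents reproduces the same constraint on $r_1+r_2$ as above. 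The real work is thus concentrated in this integration by parts and in checking that the ensuing balance of regularities is exactly critical, so that one can take $r_1=r_2=0$ --- and hence lose nothing --- when $p=p_\star$.
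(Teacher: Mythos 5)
Your proposal is correct and follows essentially the same route as the paper: H\"older plus the Sobolev embedding $\dot H^{s_i}\hookrightarrow L^{q_i}$, interpolation between $\dot H^{-\alpha}$ and $\dot H^{-1}$, and Calder\'on--Zygmund bounds for \eqref{eq:tril1}--\eqref{eq:tril2}, and for \eqref{eq:tril3} an integration by parts that rebalances the derivatives into a tensor pairing $(\nabla\Lambda^{-2}\xi\otimes\nabla\Lambda^{-2}\xi):\nabla\nabla^\perp\Lambda^{-2+\beta}\theta$ followed by the fractional Leibniz rule, with the same exponents $r_1=1-p_\star/p$, $r_2=\tfrac{1-\alpha-\beta}{1-\alpha}(1-p_\star/p)$. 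The only cosmetic difference is that you reach the symmetrized form via the planar identity $(A\cdot\nabla)A=\nabla\tfrac12|A|^2+(\nabla^\perp\!\cdot A)A^\perp$ and two integrations by parts, whereas the paper gets the equivalent expression in one step by transposing the Jacobian using $\div(\nabla^\perp\Lambda^{-2}\xi)=0$.
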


\begin{proof}
    By assumption, $\frac{p_\star}{p}\in (0,1]$.
    Define $s_1 := \frac{p_\star}{p} (1-\alpha)$, $s_2 := \frac{p_\star}{p} (1-\alpha-\beta)$ and let $q_i$,  $i=1,2$, be defined by the relations
    \begin{equation*}
        \frac{1}{q_i}=\frac{1}{2} - \frac{s_i}{2}.
    \end{equation*}
    Notice that $s_1\in (0,1)$ and $s_2\in [0,1)$, so that $q_1 \in (2,\infty)$ and $q_2\in [2,\infty)$. By construction, we have
    \begin{equation*}
        \frac{1}{q_1}+\frac{1}{q_2}+\frac{1}{p} = 1.
    \end{equation*}
    We start estimating the l.h.s. in \eqref{eq:tril1} by H\"older's inequality, obtaining
	\begin{align*}
			\left| \langle \nabla^\perp \Lambda^{-2} \xi,  (\nabla^\perp \Lambda^{-2+\beta} \varphi \cdot \nabla) \nabla^\perp  \Lambda^{-2} \theta   \rangle \right|
			\le \| \nabla^\perp\Lambda^{-2} \xi \|_{L^{q_1}} \| \nabla^\perp \Lambda^{-2+\beta} \varphi \|_{L^{q_2}}\|  \nabla\nabla^\perp  \Lambda^{-2} \theta\|_{L^{p}}
	\end{align*}
	Thanks to Sobolev embeddings \cite[Thm. 1.38]{Bahouri2011} and interpolation estimates \cite[Prop. 1.32]{Bahouri2011}, we have
	\begin{align*}
		&\| \nabla^\perp\Lambda^{-2} \xi \|_{L^{q_1}} \lesssim \| \nabla^\perp\Lambda^{-2} \xi \|_{\dot H^{s_1}} \le \| \xi \|_{\dot H^{s_1 - 1}} \le  \| \xi \|_{\dot H^{-\alpha}}^{1-r_1} \| \xi \|_{\dot H^{-1}}^{r_1},\\
		&\| \nabla^\perp \Lambda^{-2+\beta} \varphi \|_{L^{q_2}} \lesssim \| \nabla^\perp \Lambda^{-2+\beta} \varphi \|_{\dot H^{s_2}} \le \| \varphi \|_{\dot H^{s_2-1+\beta}} \le  \| \varphi \|_{\dot H^{-\alpha}}^{1-r_2} \| \varphi \|_{\dot H^{-1}}^{r_2},
	\end{align*}
    where
    \begin{align*}
        r_1=\frac{1-\alpha-s_1}{1-\alpha}=1-\frac{p_\star}{p}, \qquad
        r_2 = \frac{1-\alpha-\beta-s_2}{1-\alpha}=\frac{1-\alpha-\beta}{1-\alpha} \left(1-\frac{p_\star}{p} \right);
    \end{align*}
    on the other hand, Calder\'on-Zygmund theory \cite[Prop. 7.5]{Bahouri2011} provides 
	\begin{equation}\label{eq:CZ_for_theta}
		\|  \nabla\nabla^\perp  \Lambda^{-2} \theta\|_{L^{p}} \lesssim  \| \theta\|_{L^{p}}.
	\end{equation}
    Note that when $p=p_\star$, we have 
    $r_1=r_2=0$, while when $p>p_\star$ we have $r_1>0$.
    Combining the above estimates yields the proof of \eqref{eq:tril1}.
    The proof of \eqref{eq:tril2} is completely analogous, so we are left with proving \eqref{eq:tril3}.
    
    Exploiting the divergence-free property of $\nabla^\perp$ and H\"older's inequality, we have
    \begin{equation}\label{eq:estim_trilinear_proof}\begin{split}
        &\left| \langle \nabla^\perp \Lambda^{-2} \xi,  ( \nabla \nabla^\perp  \Lambda^{-2} \xi)^T \nabla^\perp \Lambda^{-2+\beta} \theta    \rangle \right|\\
        & \qquad = \left| \langle \nabla^\perp \Lambda^{-2} \xi,  ( \nabla \nabla^\perp \Lambda^{-2+\beta} \theta)^T  \nabla^\perp  \Lambda^{-2} \xi    \rangle \right|\\
        & \qquad = \left| \langle \nabla^\perp \Lambda^{-2} \xi \otimes  \nabla^\perp  \Lambda^{-2} \xi,  \nabla \nabla^\perp \Lambda^{-2+\beta} \theta     \rangle \right|\\
        & \qquad =  \left| \langle \Lambda^\beta ( \nabla^\perp \Lambda^{-2} \xi \otimes  \nabla^\perp  \Lambda^{-2} \xi),  \nabla \nabla^\perp \Lambda^{-2} \theta     \rangle \right|\\
        &\qquad \le \|  \Lambda^\beta ( \nabla^\perp \Lambda^{-2} \xi \otimes  \nabla^\perp  \Lambda^{-2} \xi) \|_{L^{\frac{q_1 q_2}{q_1 + q_2}}} \| \nabla \nabla^\perp \Lambda^{-2} \theta \|_{L^p}.
    \end{split}\end{equation}
    Then, fractional Leibniz rule \cite[Thm. 1]{GraOh2014}, Sobolev embeddings and interpolation imply that
    \begin{align*}
        \|  \Lambda^\beta ( \nabla^\perp \Lambda^{-2} \xi \otimes  \nabla^\perp  \Lambda^{-2} \xi) \|_{L^{\frac{q_1 q_2}{q_1 + q_2}}} 
        &\lesssim \| \nabla^\perp \Lambda^{-2} \xi\|_{L^{q_1}} \| \nabla^\perp \Lambda^{-2+\beta} \xi \|_{L^{q_2}} \\
        &\lesssim  \| \nabla^\perp\Lambda^{-2} \xi \|_{\dot H^{s_1}} \| \nabla^\perp \Lambda^{-2+\beta} \xi \|_{\dot H^{s_2}} \\
        &\le  \| \xi \|_{\dot H^{s_1 - 1}} \| \xi \|_{\dot H^{s_2-1+\beta}} \\
        &\le \| \xi \|_{\dot H^{-\alpha}}^{2-r_1-r_2} \| \xi \|_{\dot H^{-1}}^{r_1+r_2};
    \end{align*}
    together with \eqref{eq:CZ_for_theta} and \eqref{eq:estim_trilinear_proof}, this concludes the proof of \eqref{eq:tril3}.
\end{proof}

\section{Strong existence via vanishing viscosity approximations}\label{sec:strong_existence}

Let $\alpha\in (0,1)$, $\beta\in [0,1)$ fixed. For $\nu>0$, we consider the viscous approximation of \eqref{eq:SPDEs_Ito} given by
\begin{equation}\label{eq:viscous_SPDE}
\begin{cases}
    \dd \theta^\nu + u^\nu\cdot\nabla \theta^\nu \dd t + \dd W\cdot\nabla \theta^\nu = \left(\frac{c_0}{2}+\nu\right) \Delta\theta^\nu \dd t\\
    u^\nu = -\nabla^\perp \Lambda^{-2+\beta}\theta^\nu, \quad \theta^\nu\vert_{t=0}=\theta^\nu_0
\end{cases}\end{equation}
where $\theta^\nu_0\in L^1\cap L^\infty\cap \dot H^{-1}$.

The next statement, which collects useful properties of solutions to \eqref{eq:viscous_SPDE}, can be obtained by readapting verbatim the proofs of \cite[Lem. 3.2 \& Lem. 4.7]{BGM2025}.

\begin{proposition}\label{prop:viscous_approx}
    For any $\nu>0$ and $\theta^\nu_0\in L^1\cap L^\infty\cap \dot H^{-1}$, strong existence and pathwise uniqueness holds for solutions $\theta^\nu\in L^\infty_{\omega,t}(L^1\cap L^\infty)\cap L^2_{\omega,t} H^1$ to \eqref{eq:viscous_SPDE}; moreover we have the $\P$-a.s. pathwise bound
    \begin{equation}\label{eq:viscous_apriori1}
        \sup_{t\in [0,T]} \| \theta^\nu_t\|_{L^2}^2 +2\nu\int_0^T \| \nabla\theta^\nu_t\|_{L^2}^2 \dd t \leq 2 \| \theta^\nu_0\|_{L^2}^2.
    \end{equation}
    Furthermore, if the initial datum admits the decomposition $\theta^\nu_0=\theta^{\nu,>}_0+\theta^{\nu,<}_0$ with $\theta^{\nu,\lessgtr}_0\in L^1\cap L^\infty$, then similarly one has $\theta^\nu=\theta^{\nu,>}+\theta^{\nu,<}$, with the $\P$-a.s. estimates
    \begin{equation}\label{eq:viscous_apriori2}
        \sup_{t\in [0,T]} \| \theta^{\nu,>}_t\|_{L^q} \leq \| \theta_0^{\nu,>}\|_{L^q}, \quad
        \sup_{t\in [0,T]} \| \theta^{\nu,<}_t\|_{L^q} \leq \| \theta_0^{\nu,<}\|_{L^q}
        \quad \forall\, q\in [1,\infty].
    \end{equation}
\end{proposition}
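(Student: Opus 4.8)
The plan is to transcribe, almost verbatim, the arguments of \cite[Lem.~3.2 \& Lem.~4.7]{BGM2025}, whose structural ingredients — the genuine parabolicity $\nu>0$, the exact It\^o--Stratonovich correction $\tfrac{c_0}{2}\Delta$, and the divergence-free nature of both $u^\nu$ and $W$ — are all available here. Concretely I would proceed through: (i) a priori bounds at the level of smooth approximations; (ii) existence of a probabilistically weak solution in the stated class; (iii) pathwise uniqueness; (iv) the upgrade to strong existence via Gy\"ongy--Krylov; (v) the decomposition statement. For (i), working with a regularized equation (mollified noise and/or Galerkin truncation, with smooth-in-$x$ solutions), I would apply It\^o's formula to $\|\theta^\nu_t\|_{L^q}^q$ for $q\in[2,\infty)$: the nonlinear drift yields $-\langle u^\nu,\nabla|\theta^\nu|^q\rangle=0$ since $\div u^\nu=0$, and the first-order stochastic term integrates to $-\langle\nabla|\theta^\nu|^q,\dd W\rangle$, which again vanishes, so that both the transport term and the entire martingale part disappear; the parabolic term $-q(q-1)(\tfrac{c_0}{2}+\nu)\int|\theta^\nu|^{q-2}|\nabla\theta^\nu|^2$ and the quadratic-variation term $+\tfrac{q(q-1)c_0}{2}\int|\theta^\nu|^{q-2}|\nabla\theta^\nu|^2$ (using $C(0)=c_0 I$) then combine into $-q(q-1)\nu\int|\theta^\nu|^{q-2}|\nabla\theta^\nu|^2\le 0$. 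This gives $\tfrac{\dd}{\dd t}\|\theta^\nu_t\|_{L^q}^q\le 0$ pathwise; the full range $q\in[1,\infty]$ follows by the same argument (after regularizing $|\cdot|^q$ near the origin when $q<2$) together with the limits $q\to1^+$ and $q\to\infty$, while the case $q=2$ additionally yields $\theta^\nu\in L^2_{\omega,t}H^1$. Passing to the limit in the regularization produces \eqref{eq:viscous_apriori1}--\eqref{eq:viscous_apriori2}; the constant $2$ in \eqref{eq:viscous_apriori1} is harmless slack absorbing the approximation errors, the sharp identity being $\|\theta^\nu_t\|_{L^2}^2+2\nu\int_0^t\|\nabla\theta^\nu_s\|_{L^2}^2\dd s=\|\theta^\nu_0\|_{L^2}^2$.

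For (ii), existence of a solution in $L^\infty_{\omega,t}(L^1\cap L^\infty)\cap L^2_{\omega,t}H^1$ follows from the approximation scheme and the uniform bounds just obtained via a Prokhorov--Skorokhod tightness argument: the viscosity $\nu\Delta$ supplies the spatial compactness, while a standard fractional-in-time (Aubin--Lions type) estimate handles the time variable. The only genuinely delicate point is identifying the weak limit with a solution in the sense of \cref{def:solution}, i.e.\ recovering the It\^o stochastic integral in \eqref{eq:def_of_solution}; this is routine for viscous transport-noise SPDEs.

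For (iii)--(iv), the viscosity is used in an essential way. Given two solutions $\theta^{\nu,1},\theta^{\nu,2}$ with the same datum, set $\eta := \theta^{\nu,1}-\theta^{\nu,2}$ and $v := u^{\nu,1}-u^{\nu,2}=-\nabla^\perp\Lambda^{-2+\beta}\eta$; applying It\^o's formula to $\|\eta_t\|_{L^2}^2$ (and, when $\beta=0$, to a companion $\|\eta_t\|_{\dot H^{-1}}^2$ balance), the $u^{\nu,1}$-transport term, the noise terms and the It\^o correction again collapse to the dissipation $-2\nu\|\nabla\eta\|_{L^2}^2$, and one is left with the single term $2\langle\theta^{\nu,2},v\cdot\nabla\eta\rangle$; this is bounded by $\nu\|\nabla\eta\|_{L^2}^2+C_\nu\|\theta^{\nu,2}_t\|_{L^\infty}^2\|v\|_{L^2}^2$, the gradient part being absorbed by the dissipation, where $\|v\|_{L^2}=\|\eta\|_{\dot H^{\beta-1}}$ is controlled by $\|\eta\|_{L^2}$ together with $\|\eta\|_{L^1}$ for $\beta\in(0,1)$, and by the $\dot H^{-1}$-regularity propagated from the data (together with the algebraic cancellation \eqref{eq:intro_old_nonlinear}) when $\beta=0$. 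Since $\theta^{\nu,2}\in L^\infty_{\omega,t}L^\infty$ by step (i), a (stochastic) Gr\"onwall argument forces $\eta\equiv 0$; strong existence and pathwise uniqueness then follow from the Gy\"ongy--Krylov lemma.

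Finally, for (v): once the unique $\theta^\nu$, and hence the divergence-free field $u^\nu\in L^\infty_{\omega,t}L^\infty\cap L^2_{\omega,t}H^1$, are constructed, let $\theta^{\nu,\lessgtr}$ be the unique solutions of the \emph{linear} advection--diffusion SPDEs $\dd\theta^{\nu,\lessgtr}+u^\nu\cdot\nabla\theta^{\nu,\lessgtr}\dd t+\dd W\cdot\nabla\theta^{\nu,\lessgtr}=(\tfrac{c_0}{2}+\nu)\Delta\theta^{\nu,\lessgtr}\dd t$ with data $\theta^{\nu,\lessgtr}_0$, which are well posed in $L^\infty_{\omega,t}(L^1\cap L^\infty)\cap L^2_{\omega,t}H^1$ for the now-known drift $u^\nu$. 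By linearity $\theta^{\nu,>}+\theta^{\nu,<}$ solves the same linear problem as $\theta^\nu$ with datum $\theta^\nu_0$, hence coincides with $\theta^\nu$ by uniqueness, and the bound \eqref{eq:viscous_apriori2} for each component is exactly the It\^o computation of step (i) with no nonlinear term present. I expect the main obstacle to be the limit-identification in step (ii) — in particular the passage to the limit in the stochastic integral — but this is entirely standard; the a priori estimates and the uniqueness are comparatively soft, thanks to the genuine parabolicity and the divergence-free structure.
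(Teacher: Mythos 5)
Your overall route coincides with the paper's, which for this proposition simply defers to \cite[Lem.~3.2 \& Lem.~4.7]{BGM2025}: It\^o's formula on $\|\theta^\nu_t\|_{L^q}^q$ at the level of regular approximations, with the transport and martingale terms vanishing by incompressibility and the It\^o quadratic variation $+\tfrac{q(q-1)c_0}{2}\int|\theta^\nu|^{q-2}|\nabla\theta^\nu|^2$ cancelling the corrector part of the Laplacian so that only $-q(q-1)\nu\int|\theta^\nu|^{q-2}|\nabla\theta^\nu|^2$ survives; then compactness for existence, an energy estimate exploiting $\nu>0$ for uniqueness, Gy\"ongy--Krylov, and linearity of the advection--diffusion problem for the decomposition \eqref{eq:viscous_apriori2}. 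Steps (i), (ii), (iv), (v) are sound as sketched.

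There is, however, a concrete gap in your uniqueness step (iii) when $\beta\in(0,1)$. In the $L^2$ balance for $\eta$ you must control $\|v\|_{L^2}=\|\eta\|_{\dot H^{\beta-1}}$, and you propose to do so through $\|\eta\|_{L^1}$ and $\|\eta\|_{L^2}$. Interpolation and Sobolev embedding give at best $\|\eta\|_{\dot H^{\beta-1}}\lesssim\|\eta\|_{L^{2/(2-\beta)}}\le\|\eta\|_{L^1}^{1-\beta}\|\eta\|_{L^2}^{\beta}$; but $\|\eta\|_{L^1}$ is only \emph{bounded} (there is no $L^1$ contraction for the difference of two solutions, since the drifts $u^{\nu,1}\neq u^{\nu,2}$ differ) and does not vanish with $\eta$. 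The resulting differential inequality is of the form $\tfrac{\dd}{\dd t}\|\eta_t\|_{L^2}^2\lesssim\big(\|\eta_t\|_{L^2}^2\big)^{\beta}$ with exponent $\beta<1$, which violates the Osgood condition and admits nontrivial solutions emanating from zero; uniqueness does not follow. The repair is to run the companion $\dot H^{-1}$ balance --- which you reserve for $\beta=0$ --- for \emph{every} $\beta$: there the viscosity contributes $-2\nu\|\eta\|_{L^2}^2$ after the noise cancellation, the term $\langle\Lambda^{-2}\eta,u^{\nu,1}\cdot\nabla\eta\rangle=-\langle u^{\nu,1}\cdot\nabla\Lambda^{-2}\eta,\eta\rangle$ is bounded by $\|u^{\nu,1}\|_{L^\infty}\|\eta\|_{\dot H^{-1}}\|\eta\|_{L^2}$ with $\|u^{\nu,1}\|_{L^\infty}\lesssim\|\theta^{\nu,1}\|_{L^1}+\|\theta^{\nu,1}\|_{L^\infty}$, and the remaining term is bounded by $\|\theta^{\nu,2}\|_{L^\infty}\|\eta\|_{\dot H^{-1}}\|\eta\|_{\dot H^{\beta-1}}\le\|\theta^{\nu,2}\|_{L^\infty}\|\eta\|_{\dot H^{-1}}^{2-\beta}\|\eta\|_{L^2}^{\beta}$ via the interpolation $\|\eta\|_{\dot H^{\beta-1}}\le\|\eta\|_{\dot H^{-1}}^{1-\beta}\|\eta\|_{L^2}^{\beta}$; Young's inequality then absorbs all $\|\eta\|_{L^2}^2$ contributions into the viscous dissipation and Gr\"onwall closes in $\|\eta\|_{\dot H^{-1}}^2$, which is finite because the $\dot H^{-1}$ regularity of $\theta^\nu_0$ is propagated. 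With this modification (or a joint Gr\"onwall for $\|\eta\|_{L^2}^2+\|\eta\|_{\dot H^{-1}}^2$) the argument is complete.
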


Note that $\nu$ does not play any role in estimate \eqref{eq:viscous_apriori2}; by choosing $\theta^{\nu,>}_0=\theta^\nu_0$, $\theta^{\nu,<}_0=0$, this provides $L^\infty_{\omega,t} L^q$-bounds for $\theta^\nu$.

We can construct directly strong solutions to \eqref{eq:SPDEs_Ito} by taking the vanishing viscosity limit in the approximation scheme \eqref{eq:viscous_SPDE}. For a relevant precursor in this direction, see \cite{JiaoLuo2025_Boussinesq}.

\begin{theorem}\label{thm:strong_existence}
    Let $\alpha \in (0,1)$, $\beta \in [0,1)$ and $p\in (1,\infty)$ satisfy
	\begin{equation}\label{eq:parameters_strong_existence}
        \alpha+\frac{\beta}{2}+\frac{1}{p}\leq 1,
        \qquad \alpha + \beta \le 1.
	\end{equation}
    Let $\theta_0\in L^p\cap \dot H^{-1}$; consider a family  $\{\theta^\nu_0\}_{\nu\in (0,1]}\subset L^1\cap L^\infty\cap \dot H^{-1}$ such that
    \begin{equation}\label{eq:approx_initial_data}
        \lim_{\nu\downarrow 0}\|\theta^\nu_0-\theta_0\|_{L^p\cap \dot H^{-1}}=0, \qquad
        \lim_{\nu\downarrow 0} \nu^{2-\alpha} \| \theta^\nu_0\|_{L^2}^2 = 0,
    \end{equation}
    and denote by $\theta^\nu$ the associated solutions to \eqref{eq:viscous_SPDE}. Then the family $\{\theta^\nu\}_{\nu\in (0,1]}$ is Cauchy in $L^1_\omega C_T \dot H^{-1}\cap L^2_{\omega,t} \dot H^{-\alpha}$ and its limit $\theta$ is a probabilistically strong solution to \eqref{eq:SPDEs_Ito}, such that
    \begin{equation}\label{eq:strong_existence_regularity}
        \theta\in L^\infty_{\omega,t} L ^p \cap L^2_{\omega, t} ( \dot H^{-1} \cap \dot H^{-\alpha} )\cap \cU^{p_\star}.
    \end{equation}
\end{theorem}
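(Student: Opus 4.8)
\emph{Overview.} The plan is a vanishing-viscosity/compactness argument, driven by the coercivity of the noise (\cref{cor:reg_term_convergence}) and the nonlinear estimates of \cref{lem:trilinear_estimates}. I will (1) establish a priori bounds for $\theta^\nu$ that are uniform in $\nu$, (2) prove that $\{\theta^\nu\}$ is Cauchy by estimating the difference of two approximations in $\|\cdot\|_{\dot H^{-1}}$, and (3) identify the limit as a probabilistically strong solution by passing to the limit in the weak formulation \eqref{eq:def_of_solution}. The only genuinely new difficulties, compared with a standard viscous passage, are the critical case $p=p_\star$ and the mismatch of viscosity coefficients along the sequence.

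\emph{Uniform a priori bounds.} From \eqref{eq:viscous_apriori2} (with $\theta_0^{\nu,>}=\theta_0^\nu$) one has $\|\theta^\nu\|_{L^\infty_{\omega,t}L^p}\le\|\theta_0^\nu\|_{L^p}$, uniformly bounded by \eqref{eq:approx_initial_data}. For the negative-order bounds, apply It\^o's formula to $\|\theta^\nu_t\|_{\dot H^{-1}}^2=\langle\Lambda^{-2}\theta^\nu_t,\theta^\nu_t\rangle$ and take expectations: since $\theta^\nu$ is regular, the It\^o--Stratonovich correction $\tfrac{c_0}{2}\Delta\theta^\nu$ combines with the quadratic variation of the transport martingale to give precisely $\E\langle\trace[QD^2G]\ast\theta^\nu_s,\theta^\nu_s\rangle$, estimated by \cref{cor:reg_term_convergence}; the extra dissipation contributes $-2\nu\E\|\theta^\nu_s\|_{L^2}^2\le0$; and the nonlinear term, after rewriting $\cN^\beta$ via the momentum formulation \eqref{eq:gSQG_nonlin_momentum}, integrating by parts and discarding $\langle h,(u\cdot\nabla)h\rangle=\tfrac12\int u\cdot\nabla|h|^2=0$ (by $\div u=0$; for $\beta=0$ the whole pairing vanishes), is bounded via \eqref{eq:tril3} at $\xi=\theta=\theta^\nu$. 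One thus obtains
\[
\E\|\theta^\nu_t\|_{\dot H^{-1}}^2+\kappa_1\!\int_0^t\!\E\|\theta^\nu_s\|_{\dot H^{-\alpha}}^2\dd s\le\E\|\theta_0^\nu\|_{\dot H^{-1}}^2+\kappa_2\!\int_0^t\!\E\|\theta^\nu_s\|_{\dot H^{-1}}^2\dd s+C\!\int_0^t\!\E\big[\|\theta^\nu_s\|_{\dot H^{-\alpha}}^{2-r_1-r_2}\|\theta^\nu_s\|_{\dot H^{-1}}^{r_1+r_2}\|\theta^\nu_s\|_{L^p}\big]\dd s.
\]
When $p>p_\star$ we have $r_1+r_2>0$, so Young's inequality absorbs $\eps\|\theta^\nu_s\|_{\dot H^{-\alpha}}^2$ into the $\kappa_1$-term and leaves $C\|\theta^\nu_s\|_{\dot H^{-1}}^2\|\theta^\nu_s\|_{L^p}^{2/(r_1+r_2)}$ with uniformly bounded $L^p$-factor; Grönwall closes. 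When $p=p_\star$ (so $r_1=r_2=0$) one uses the $\cU^{p_\star}$-decomposition: write $\theta_0^\nu=\theta_0^{\nu,>}+\theta_0^{\nu,<}$ with $\sup_\nu\|\theta_0^{\nu,>}\|_{L^{p_\star}}$ as small as desired and $\sup_\nu\|\theta_0^{\nu,<}\|_{L^q}<\infty$ for some $q>p_\star$ (possible since $\{\theta_0^\nu\}$, being convergent, is precompact hence uniformly integrable in $L^{p_\star}$), propagate it by \eqref{eq:viscous_apriori2}, and place the $L^p$-slot of \eqref{eq:tril3} on $\theta^{\nu,>}$ (absorbed, by smallness) or on $\theta^{\nu,<}$ ($r_1+r_2>0$, bounded $L^q$-factor). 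This yields $\sup_\nu\big(\|\theta^\nu\|_{L^\infty_{\omega,t}\dot H^{-1}}+\|\theta^\nu\|_{L^2_{\omega,t}(\dot H^{-1}\cap\dot H^{-\alpha})}\big)<\infty$.

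\emph{Cauchy property.} For $\nu,\mu\in(0,1]$ set $\xi:=\theta^\nu-\theta^\mu$ and repeat the It\^o computation on $\|\xi_t\|_{\dot H^{-1}}^2$. The covariance term is again controlled by \cref{cor:reg_term_convergence}; the nonlinear difference, via the polarization $\cN^\beta(\theta^\nu)-\cN^\beta(\theta^\mu)=\cQ(\xi,\theta^\nu)+\cQ(\theta^\mu,\xi)$ (where $\cN^\beta(\theta)=\cQ(\theta,\theta)$ and $\cQ$ is the bilinear form underlying \eqref{eq:gSQG_nonlin_momentum}) and the same integration by parts, produces — after discarding null terms coming from $\div u=0$ — pairings each bounded by \eqref{eq:tril1}, \eqref{eq:tril2} or \eqref{eq:tril3}, hence by $\|\xi_s\|_{\dot H^{-\alpha}}^{2-r_1-r_2}\|\xi_s\|_{\dot H^{-1}}^{r_1+r_2}(\|\theta^\nu_s\|_{L^p}+\|\theta^\mu_s\|_{L^p})$, and are handled exactly as in Step 1 (using, in the critical case, the $\cU^{p_\star}$-decompositions of $\theta^\nu$ and $\theta^\mu$). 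The new term is the viscosity mismatch $2\langle\Lambda^{-2}\xi_s,\nu\Delta\theta^\nu_s-\mu\Delta\theta^\mu_s\rangle=-2\langle\xi_s,\nu\theta^\nu_s-\mu\theta^\mu_s\rangle$: for the $\nu$-piece, interpolation gives $\nu|\langle\xi_s,\theta^\nu_s\rangle|\le\nu\|\xi_s\|_{\dot H^{-\alpha}}\|\theta^\nu_s\|_{\dot H^{\alpha}}\le\eps\|\xi_s\|_{\dot H^{-\alpha}}^2+C_\eps\,\nu^2\|\theta^\nu_s\|_{L^2}^{2-2\alpha}\|\nabla\theta^\nu_s\|_{L^2}^{2\alpha}$, and, integrating in time with H\"older exponents $(1-\alpha)^{-1},\alpha^{-1}$ together with the energy identity \eqref{eq:viscous_apriori1},
\[
\nu^2\int_0^T\|\theta^\nu_s\|_{L^2}^{2-2\alpha}\|\nabla\theta^\nu_s\|_{L^2}^{2\alpha}\dd s\lesssim_T\nu^2\,\|\theta_0^\nu\|_{L^2}^{2(1-\alpha)}\big(\nu^{-1}\|\theta_0^\nu\|_{L^2}^2\big)^\alpha\lesssim_T\nu^{2-\alpha}\|\theta_0^\nu\|_{L^2}^2\xrightarrow[\nu\downarrow0]{}0
\]
by the second condition in \eqref{eq:approx_initial_data}; the $\mu$-piece is symmetric. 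Absorbing the $\eps$-terms into $\kappa_1$, using Burkholder--Davis--Gundy to control $\E\sup_t\|\xi_t\|_{\dot H^{-1}}^2$ from the remaining martingale, and Grönwall, we get $\|\theta^\nu-\theta^\mu\|_{L^1_\omega C_T\dot H^{-1}\cap L^2_{\omega,t}\dot H^{-\alpha}}\to0$; call $\theta$ the limit.

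\emph{Identification of the limit.} By lower semicontinuity the Step 1 bounds pass to $\theta$, giving $\theta\in L^\infty_{\omega,t}L^p\cap L^2_{\omega,t}(\dot H^{-1}\cap\dot H^{-\alpha})$; the $\cU^{p_\star}$-decompositions also pass to the limit (the small component converges in $L^2_{\omega,t}\dot H^{-\alpha}$, the large one is bounded in $L^\infty_{\omega,t}L^q$ and admits a weak-$*$ limit), so $\theta\in\cU^{p_\star}$, and \cref{rem:defn_solution2} then furnishes the integrability required in (ii)--(iii) of \cref{def:solution}. In \eqref{eq:def_of_solution}: the viscous term $(\tfrac{c_0}{2}+\nu)\langle\theta^\nu_s,\Delta\varphi\rangle\to\tfrac{c_0}{2}\langle\theta_s,\Delta\varphi\rangle$ since $\theta^\nu\to\theta$ in $L^2_{\omega,t}\dot H^{-\alpha}$ and $\nu\downarrow0$; the stochastic term converges in $L^2_\omega$ by It\^o's isometry and the bracket bound of \cref{rem:defn_solution} with $\theta^\nu\to\theta$ in $L^2_{\omega,t}H^{-1-\alpha}$; and $\langle u^\nu_s\theta^\nu_s,\nabla\varphi\rangle\to\langle u_s\theta_s,\nabla\varphi\rangle$ follows from strong convergence $u^\nu\to u$ in a negative Sobolev norm (inherited from $\theta^\nu\to\theta$ in $L^2_{\omega,t}\dot H^{-\alpha}$), the uniform $L^\infty_{\omega,t}L^p$-bound, and the product estimate of \cref{rem:defn_solution2}. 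Since each $\theta^\nu$ is $\cF^W$-adapted by \cref{prop:viscous_approx}, so is $\theta$, which is therefore a probabilistically strong solution to \eqref{eq:SPDEs_Ito}. I expect the real obstacles to be the critical-exponent bookkeeping — fixing the $\cU^{p_\star}$-smallness uniformly in $\nu$, and before the Grönwall constant is known, so that the absorbed terms stay below $\kappa_1$ — and the control of the viscosity mismatch, which is exactly what forces the normalization $\nu^{2-\alpha}\|\theta_0^\nu\|_{L^2}^2\to0$ in \eqref{eq:approx_initial_data}.
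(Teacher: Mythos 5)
Your strategy coincides with the paper's: It\^o's formula for $\|\theta^\nu_t-\theta^\mu_t\|_{\dot H^{-1}}^2$, coercivity of the covariance term via \cref{cor:reg_term_convergence}, the momentum formulation \eqref{eq:gSQG_nonlin_momentum} polarized and fed into \cref{lem:trilinear_estimates}, the $\cU^{p_\star}$-decomposition (propagated from the initial data through \eqref{eq:viscous_apriori2}) to handle the critical case, and the viscosity mismatch absorbed by interpolation and \eqref{eq:viscous_apriori1}, which is exactly where the normalization $\nu^{2-\alpha}\|\theta^\nu_0\|_{L^2}^2\to 0$ enters. Two benign differences: you treat the two viscous terms symmetrically, whereas the paper writes $\nu\Delta\theta^\nu-\tilde\nu\Delta\theta^{\tilde\nu}=\tilde\nu\Delta\xi+(\nu-\tilde\nu)\Delta\theta^\nu$ to exploit the good sign of $-2\tilde\nu\|\xi\|_{L^2}^2$ and estimate only one remainder (both routes work, since your two error terms each vanish by \eqref{eq:approx_initial_data}); and your separate a priori bound on a single $\theta^\nu$ in $L^2_{\omega,t}(\dot H^{-1}\cap\dot H^{-\alpha})$ is not needed, as the paper extracts these bounds for the limit directly from the Cauchy estimate.

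The one genuine gap is the control of the supremum in time. You invoke Burkholder--Davis--Gundy to bound $\E\sup_t\|\xi_t\|_{\dot H^{-1}}^2$, but this requires absorbing $\E[M]_T^{1/2}$ for the martingale $M_t=-2\int_0^t\langle \xi_s\nabla\Lambda^{-2}\xi_s,\dd W_s\rangle$. The natural bracket bound at this regularity is $\dd[M]_t\lesssim \|\xi_t\nabla\Lambda^{-2}\xi_t\|_{L^1}^2\,\dd t\le \|\xi_t\|_{L^2}^2\|\xi_t\|_{\dot H^{-1}}^2\,\dd t$, so after Young's inequality you are left with $C_\eps\,\E\int_0^T\|\xi_s\|_{L^2}^2\dd s\lesssim T(\|\theta^\nu_0\|_{L^2}^2+\|\theta^\mu_0\|_{L^2}^2)$, which does \emph{not} vanish: assumption \eqref{eq:approx_initial_data} allows $\|\theta^\nu_0\|_{L^2}\to\infty$ (only $\nu^{2-\alpha}\|\theta^\nu_0\|_{L^2}^2\to 0$ is required). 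Sharper bracket bounds in $H^{-1-\alpha}$ would require product estimates that reintroduce a restriction of the type $\alpha\le 1/2$, which is precisely what the paper is trying to avoid. The paper sidesteps BDG entirely: from the pathwise inequality $Z_t+M_t\le Z_0+C\int_0^t Z_s\dd s+\mathrm{err}$ it applies Gr\"onwall pathwise, evaluates at arbitrary stopping times, and then uses Lenglart's inequality to obtain $\E\sup_t\|\xi_t\|_{\dot H^{-1}}^{2\delta}\lesssim(\ldots)^\delta$ for any $\delta\in(0,1)$ --- which with $\delta=1/2$ yields exactly the claimed Cauchy property in $L^1_\omega C_T\dot H^{-1}$, a weaker (but sufficient) conclusion than the $L^2_\omega$-one you aim for. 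You should replace the BDG step by this Lenglart/stopping-time argument.
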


\begin{remark}
    For any $p\in (1,\infty)$ and any $\theta_0\in L^p\cap \dot H^{-1}$, one can always construct an approximating family $\{\theta^\nu\}_{\nu\in (0,1]}\subset L^1\cap L^\infty\cap \dot H^{-1}$ satisfying \eqref{eq:approx_initial_data}, e.g. by applying \cref{lem:approximation} with $\eps=\nu^\delta$ in \cref{app:tech_lem} for some $\delta<1-\alpha/2$.
    Notice that \eqref{eq:parameters_strong_existence}-\eqref{eq:strong_existence_regularity} imply that the weak formulation \eqref{eq:def_of_solution} of the SPDE \eqref{eq:SPDEs_Ito} is meaningful by \cref{rem:defn_solution2}.
\end{remark}

\begin{proof}
    We divide the proof into several steps.
    
    \textit{Step 1: The set-up.}
    Let $\nu>\tilde \nu$ be fixed and set $\xi:=\theta^\nu-\theta^{\tilde \nu}$.
    Thanks to the regularity and integrability of $\theta^\nu$, $\theta^{\tilde \nu}$ (cf. \cref{prop:viscous_approx}), it's easy to check that $\cN^\beta(\theta^\nu),\, \cN^\beta(\theta^{\tilde \nu})\in L^\infty_{\omega,t} \dot H^{-1}_x$ and $\xi\in L^2_{\omega,t} \dot H^{-1}$, where we employed the shorthand notation \eqref{eq:shortcut_nonlinearity} for the nonlinearity.
    We can therefore rigorously apply It\^o formula in $\dot H^{-1}$ (cf. \cite[Thm. 2.13]{RozLot2018}) to find
    \begin{align*}
        \dd \| \xi_t\|_{\dot H^{-1}}^2
         + 2\langle \cN^\beta(\theta^\nu_t) & -\cN^\beta(\theta^{\tilde\nu}_t), \Lambda^{-2} \xi_t\rangle \dd t
        + \dd M_t \\
        & = 2\langle \nu \Delta\theta^\nu_t -\tilde\nu \Delta\theta^{\tilde\nu}_t, \Lambda^{-2} \xi_t  \rangle \dd t + \langle \trace[ Q D^2 G] \ast \xi_t, \xi_t \rangle \dd t
    \end{align*}
    where $M_t=2\int_0^t \langle \Lambda^{-2} \xi_s \nabla \xi_s, \dd W_s\rangle=-2\int_0^t \langle \xi_s \nabla \Lambda^{-2} \xi_s, \dd W_s\rangle$ is a continuous martingale thanks to \cite[Lem. 2.8]{GalLuo2025};
    the term associated to $\trace[ Q D^2 G]$ arises from the It\^o-Stratonovich corrector and the It\^o second-order term, similarly to \cite{coghi2023existence,BGM2025}.
    Writing the above in integral form, rearranging terms, we have the $\P$-a.s.\ pathwise estimate
    \begin{align*}
        \| \xi_t\|_{\dot H^{-1}}^2 & -\|\xi_0\|_{\dot H^{-1}}^2  + \int_0^t 2\langle \cN^\beta(\theta^\nu_s) -\cN^\beta(\theta^{\tilde\nu}_s), \Lambda^{-2} \xi_s\rangle \dd s
        + M_t\\
        & = \int_0^t \big[ -2\tilde \nu\| \xi_s\|_{L^2}^2 \dd s +2(\nu-\tilde \nu) \langle \Delta\theta^\nu_s, \Lambda^{-2}\xi_s\rangle \dd s\big]
        + \int_0^t \langle \trace[ Q D^2 G] \ast \xi_s, \xi_s \rangle \dd s\\
        & \leq 2 \nu \int_0^t |\langle \theta^\nu_s, \xi_s\rangle| \dd s -\kappa_1 \int_0^t \| \xi_s\|_{\dot H^{-\alpha}}^2 \dd s + \kappa_2 \int_0^t \| \xi_s\|_{\dot H^{-1}}^2 \dd s
    \end{align*}
    where in the last estimate we used that $\Delta \Lambda^{-2} \xi=-\xi$ and we applied \eqref{eq:key_identity_fourier}, together with the second part of \eqref{eq:regularization_convergence}.

    \textit{Step 2: Gr\"onwall-type estimates and Cauchy property.}
    We claim that there exists a deterministic constant $C_1$, depending on $\theta_0$ and the approximation scheme $\{\theta^\nu_0\}_\nu$, but not on $\nu,\tilde\nu$, such that $\P$-a.s.
    \begin{equation}\label{eq:strong_exist_proof_eq1}
        2|\langle \cN^\beta(\theta^\nu_s) -\cN^\beta(\theta^{\tilde\nu}_s), \Lambda^{-2} \xi_s\rangle| \leq \frac{\kappa_1}{2} \|\xi_s\|_{\dot H^{-\alpha}}^2 + C_1 \| \xi_s\|_{\dot H^{-1}}^2\quad \forall\, s\in [0,T].
    \end{equation}
    We postpone the verification of \eqref{eq:strong_exist_proof_eq1} to Step 3 below and proceed with the argument. Combining the estimates from Step 1 with \eqref{eq:strong_exist_proof_eq1} yields
    \begin{equation}\label{eq:strong_exist_proof_eq2}\begin{split}
        \| \xi_t\|_{\dot H^{-1}}^2 & + \frac{\kappa_1}{2} \int_0^t \| \xi_s\|_{\dot H^{-\alpha}}^2 \dd s + M_t
        \leq  \|\xi_0\|_{\dot H^{-1}}^2 + C_2\int_0^t \|\xi_s\|_{\dot H^{-1}}^2 \dd s + \int_0^t 2\nu |\langle \theta^\nu_s, \xi_s\rangle| \dd s
    \end{split}\end{equation}
    with $C_2= C_1 + \kappa_2$. We can estimate the last integral by duality, as
    \begin{align*}
        2\nu |\langle \theta^\nu_s, \xi_s\rangle|
        \leq 2\nu \|  \theta^\nu_s\|_{\dot H^\alpha } \| \xi_s\|_{\dot H^{-\alpha}}
        \leq C_3 \nu^2 \|  \theta^\nu_s\|_{\dot H^\alpha }^2 + \frac{\kappa_1}{4} \| \xi_s\|_{\dot H^{-\alpha}}^2
    \end{align*}
    where we employed Young's inequality with some $C_3=C_3(\kappa_1)$.
    Moreover by interpolation estimates and the pathwise bound \eqref{eq:viscous_apriori1}, $\P$-a.s. it holds that
    \begin{align*}
        \nu^2 \int_0^t \|  \theta^\nu_s\|_{\dot H^\alpha }^2 \dd s
        \leq \nu^2 \bigg(\int_0^T \|  \theta^\nu_s\|_{\dot H^1 }^2 \dd s\bigg)^\alpha \bigg(\int_0^T \|  \theta^\nu_s\|_{L^2}^2 \dd s\bigg)^{1-\alpha}
        \lesssim T^{1-\alpha} \nu^{2-\alpha}  \| \theta^\nu_0\|_{L^2}^2.
    \end{align*}
    Define the process $Z_t \coloneq \| \xi_t\|_{\dot H^{-1}}^2 +  \frac{\kappa_1}{2}\int_0^t \| \xi_s\|_{\dot H^{-\alpha}}^2 \dd s$; reinserting these estimates in \eqref{eq:strong_exist_proof_eq2}, we arrive at the $\P$-a.s. pathwise bound
    \begin{equation}\label{eq:strong_exist_proof_eq3}
        Z_t + M_t \leq Z_0 + C_2 \int_0^t Z_s \dd s + C_4 T^{1-\alpha} \nu^{2-\alpha}  \| \theta^\nu_0\|_{L^2}^2\quad \forall\, t\in [0,T].
    \end{equation}
    From \eqref{eq:strong_exist_proof_eq3}, we can perform two kinds of estimates.
    First, taking expectation on both sides, using that $\E[M_t]=0$ and Fubini's theorem, one finds
    \begin{align*}
        \E[Z_t] \leq \| \theta^\nu_0-\theta^{\tilde \nu}_0\|_{\dot H^{-1}}^2 + C_2 \int_0^t \E[ Z_s] \dd s + C_4 T^{1-\alpha} \nu^{2-\alpha}  \| \theta^\nu_0\|_{L^2}^2;
    \end{align*}
    classical Gr\"onwall lemma then yields
    \begin{equation}\label{eq:Cauchy_v1}
        \sup_{t\in [0,T]} \E[\| \theta^\nu_t-\theta^{\tilde \nu}_t\|_{\dot H^{-1}}^2] + \E\Big[ \int_0^T \| \theta^\nu_t-\theta^{\tilde \nu}_t\|_{\dot H^{-\alpha}}^2 \dd t \Big]
        \lesssim_{C_i, T}  \| \theta^\nu_0-\theta^{\tilde \nu}_0\|_{\dot H^{-1}}^2 + \nu^{2-\alpha} \| \theta^\nu_0\|_{L^2}^2.
    \end{equation}
    By assumption \eqref{eq:approx_initial_data}, the r.h.s. converges to $0$ as $\nu,\tilde\nu\to 0^+$, thus showing that $\{\theta^\nu\}_\nu$ is Cauchy in $L^2_{\omega,t} \dot H^{-\alpha}$ and that, for any fixed $t\in [0,T]$, $\{\theta^\nu_t\}_\nu$ is Cauchy in $L^2_\omega \dot H^{-1}$.

    Secondly, following \cite{Sch2013}, we can apply Gr\"onwall at a pathwise level in \eqref{eq:strong_exist_proof_eq3} and integrate by parts to find
    \begin{align*}
        Z_t \leq e^{C_2 t} \Big( Z_0  + C_4 \nu^{2-\alpha} T^{1-\alpha} \| \theta^\nu_0\|_{L^2}^2\Big) + L_t
    \end{align*}
    where $L$ is another continuous martingale starting at $0$. As a consequence, for any stopping time $\tau$, it holds
    \begin{align*}
        \E[Z_\tau] \leq e^{C_2 T} \Big( \|\theta^\nu_0-\theta^{\tilde\nu}_0\|_{\dot H^{-1}}^2  + C_4 \nu^{2-\alpha} T^{1-\alpha} \| \theta^\nu_0\|_{L^2}^2\Big).
    \end{align*}
    By Lenglart's inequality  (cf. \cite{GeiSch2021}), for any $\delta\in (0,1)$, we deduce that \begin{equation}\label{eq:Cauchy_v2}
        \E\left[ \sup_{t\in [0,T]}\| \theta^\nu_t-\theta^{\tilde\nu}_t\|_{\dot H^{-1}}^{2\delta} \right] \lesssim_{\delta, C_i, T} \Big(\| \theta^\nu_0-\theta^{\tilde\nu}_0\|_{\dot H^{-1}}^2+\nu^{2-\alpha} \| \theta^\nu_0\|_{L^2}^2\Big)^\delta.
    \end{equation}
    As before, the r.h.s. in \eqref{eq:Cauchy_v2} vanishes as $\nu,\tilde\nu\to 0^+$ due to \eqref{eq:approx_initial_data}, showing that $\{\theta^\nu\}_\nu$ is a Cauchy sequence in $L^{2\delta}_\omega C_t \dot H^{-1}_x$, for any $\delta\in (0,1)$.

    \textit{Step 3: Functional estimates.}
    We now present the proof of estimate \eqref{eq:strong_exist_proof_eq1}.

   We can find $q\in (p_\star,\infty)$ with the following property: for any $\eps>0$, there exists a decomposition $\theta^\nu_0=\theta^{\nu,>}_0+\theta^{\nu,<}_0$ such that $\sup_{\nu>0} \| \theta^{\nu,>}_0\|_{L^{p_\star}}\leq \eps$ and $\sup_{\nu>0} \| \theta^{\nu,<}_0\|_{L^q}\leq C_\eps$.
    Indeed, when $p>p_\star$, we can trivially take $q=p$, $\theta^{\nu,>}\equiv 0$; when $p=p_\star$, recalling that $\theta^\nu_0\to \theta_0$ in $L^p$ by assumption \eqref{eq:approx_initial_data}, the claim follows from \cite[Lem. A.3]{BGM2025} (with any $q>p_\star$).
    
    By \eqref{eq:viscous_apriori2}, for any $\eps>0$, one can find a decomposition $\theta^\nu=\theta^{\nu,>}+\theta^{\nu,<}$ such that
    \begin{equation}\label{eq:uniform_decompositions}
        \sup_{\nu>0} \| \theta^{\nu,>}\|_{L^\infty_{\omega,t} L^p}\leq \eps, \qquad \sup_{\nu>0} \| \theta^{\nu,<}\|_{L^\infty_{\omega,t} L^q}\leq C_\eps.
    \end{equation}
    Let $\eps>0$ to be appropriately chosen later, such that the above decomposition holds.

    In the rest of this step, for notational simplicity we omit the time subscript (e.g.\ write $\theta$ instead of $\theta_s$).
    Recall the alternative rewriting of the nonlinearity $\cN^\beta(\theta)$ given by \eqref{eq:gSQG_nonlin_momentum}; let $u^\nu=-\nabla^\perp \Lambda^{-2+\beta}\theta^\nu$, $h^\nu=-\nabla^\perp \Lambda^{-2}\theta^\nu$, similarly for $\tilde \nu$, and set $u=u^\nu-u^{\tilde \nu}$, $h=h^\nu-h^{\tilde\nu}$. Then it holds
    \begin{align*}
        \langle \cN^\beta(\theta^\nu) -\cN^\beta(\theta^{\tilde\nu}), \Lambda^{-2} \xi\rangle
        & = \langle (u^\nu\cdot\nabla) h^\nu - (\nabla h^\nu)^T u^\nu
        - (u^{\tilde \nu}\cdot\nabla) h^{\tilde \nu} + (\nabla h^{\tilde \nu})^T u^{\tilde \nu}, - \nabla^\perp \Lambda^{-2} \xi\rangle\\
        & = \langle (u\cdot\nabla) h^\nu, h\rangle - \langle (\nabla h)^T u^\nu, h\rangle - \langle (\nabla h^{\tilde \nu})^T u, h\rangle
    \end{align*}
    where in the last passage we used the cancellation $\langle (u^{\tilde \nu}\cdot\nabla)h,h\rangle=0$ coming from $u^{\tilde \nu}$ being divergence free.
    The decomposition $\theta^\nu=\theta^{\nu,>}+\theta^{\nu,<}$ induces analogous ones $u^\nu= u^{\nu,>} + u^{\nu,<}$ and $h^\nu= h^{\nu,>} + h^{\nu,<}$, where $u^{\nu,\gtrless} \coloneqq - \nabla^ \perp \Lambda^{-2+\beta} \theta^{i,\gtrless}$ and $h^{\nu,\gtrless} \coloneqq - \nabla^ \perp \Lambda^{-2} \theta^{i,\gtrless}$; similarly for $\tilde \nu$.
    Accordingly, we further split the nonlinear terms as follows:
    \begin{equation}\label{eq:collection_nonlinear_terms}\begin{split}
        |\langle \cN^\beta(\theta^\nu) & -\cN^\beta(\theta^{\tilde\nu}), \Lambda^{-2} \xi\rangle|
        \leq I^> + I^< + J^> + J^< + K^> + K^<,\\
        I^> &\coloneqq |\langle h,  (u \cdot \nabla) h^{\nu,>} \rangle|, \qquad I^<\coloneqq |\langle  h, (u \cdot \nabla) h^{\nu,<}\rangle|,\\
        J^> &\coloneqq |\langle h,  (\nabla h^{\tilde\nu,>})^T  u  \rangle|,\qquad
        J^<\coloneqq |\langle  h, (\nabla h^{\tilde\nu,<})^T  u  \rangle|,\\
        K^> &\coloneqq |\langle  h, (\nabla h)^T u^{\nu,>}\rangle|,\qquad
        K^<\coloneqq |\langle h, (\nabla h)^T u^{\nu,<} \rangle|.		
    \end{split}\end{equation}
    Recalling the definition of all terms involved in function of $\theta^\nu$, $\theta^{\tilde \nu}$, $\theta$ and their decompositions, applying estimates \eqref{eq:tril1}-\eqref{eq:tril3} and \eqref{eq:uniform_decompositions}, we obtain
    \begin{align*}
        &I^> + J^> + K^>
        \leq \tilde C_1 \|\xi\|^2_{\dot H^{-\alpha}}(\|\theta^{\nu,>}\|_{L^{p_\star}} + \|\theta^{\tilde \nu,>}\|_{L^{p_\star}})
        \leq 2 \tilde C_1 \eps \|\xi\|_{\dot H^{-\alpha}}^2, \\
        &I^< + J^< + K^<
        \leq \tilde C_1 \| \xi \|_{\dot H^{-\alpha}}^{2-r_1-r_2} \| \xi \|_{\dot H^{-1}}^{r_1+r_2} (\| \theta^{1,<} \|_{L^q} + \| \theta^{2,<} \|_{L^q})
        \leq 2 \tilde C_1 C_\eps \| \xi \|_{\dot H^{-\alpha}}^{2-r_1-r_2} \| \xi \|_{\dot H^{-1}}^{r_1+r_2}, 
    \end{align*}
    for some $r_1$, $r_2\in [0,1)$, depending on $q$, such that $r_1+r_2>0$. 
    Choosing $\eps>0$ sufficiently small so that $8 \tilde C_1 \eps\leq \kappa_1$ then yields
    \begin{align*}
        |\langle \cN^\beta(\theta^\nu) -\cN^\beta(\theta^{\tilde\nu}), \Lambda^{-2} \xi\rangle| \leq \frac{\kappa_1}{4} \|\xi\|^2_{\dot H^{-\alpha}} + 2 \tilde C_1 C_\eps \| \xi \|_{\dot H^{-\alpha}}^{2-r_1-r_2} \| \xi \|_{\dot H^{-1}}^{r_1+r_2}
    \end{align*}
    which implies the desired estimate \eqref{eq:strong_exist_proof_eq1} after an application of Young's inequality
    (which is allowed since $r_1+r_2\in (0,2)$). 

    \textit{Step 4: Passage to the limit.}
    By Step 2, there exists a process $\theta$ such that $\theta^\nu\to \theta$ in $L^2_{\omega,t} (\dot H^{-\alpha}\cap \dot H^{-1})\cap L^1_\omega C_t \dot H^{-1}$, such that $\theta\vert_{t=0}=\theta_0$.
    Since $\theta^\nu$ were $\cF^W$-progressive processes, so is $\theta$.
    By properties of fractional operators, $u^\nu\to u$ in $ L^2_{\omega,t} \dot H^{1-\beta-\alpha}_x$, where $u=-\nabla^\perp \Lambda^{-2+\beta}\theta$. 
    Since $\theta^\nu_0\to \theta$ in $L^p$, the pathwise bound \eqref{eq:viscous_apriori2}  and weak compactness imply that
    \begin{align*}
        \sup_{\nu>0} \| \theta^\nu\|_{L^\infty_{\omega,t} L^p} \leq \sup_{\nu>0} \| \theta^\nu_0\|_{L^p}<\infty, \qquad  \theta^\nu\overset{\ast}\rightharpoonup \theta\ \text{ in } \ L^\infty_{\omega,t} L^p_x
    \end{align*}
    By assumption \eqref{eq:parameters_strong_existence}, $\alpha+\beta\leq 1$, therefore by Sobolev embeddings and weak-strong convergence we deduce that
    \begin{align*}
        u^\nu \theta^\nu\rightharpoonup u \theta\ \text{ in }\ L^2_{\omega,t} L^r_x \quad \text{where}\quad \frac{1}{r}=\frac{1}{p}+\frac{\alpha+\beta}{2}.
    \end{align*}
    Again by assumption  \eqref{eq:parameters_strong_existence}, it's easy to check that $r>1$;
    in particular this implies that $\cN^\beta(\theta^\nu)\to \cN^\beta(\theta)$ in $L^2_{\omega,t} L^1_\loc$.
    Similar arguments show that $\nu\Delta\theta^\nu\to 0$ in $L^2_{\omega,t} H^{-2}$, while $\theta^\nu \nabla\varphi\to \theta \nabla\varphi$ in $L^2_{\omega,t} \dot H^{-\alpha}$ for all $\varphi\in C^\infty_c$; aided by \cite[Lem. 2.8]{GalLuo2025}, we can therefore pass to the limit in the weak formulation of \eqref{eq:viscous_SPDE} to deduce that $\theta$ satisfies \eqref{eq:def_of_solution}.
    Overall this verifies that $\theta$ is a probabilistically strong solution, in the sense of \cref{def:solution}.

    It remains to verify that $\theta\in \cU^{p_\star}$. If $p>p_\star$, there is nothing to prove; for $p=p_\star$, similarly to \cite[Thm. 3.1]{BGM2025}, thanks to the uniform bounds \eqref{eq:uniform_decompositions}, one can pass to the limit (in weak-$\ast$ topologies) in the decompositions $\theta^\nu=\theta^{\nu,>}+\theta^{\nu,<}$, to find a similar decomposition $\theta=\theta^>+\theta^<$.   
\end{proof}

\begin{remark}
    Passing to the limit as $\tilde\nu\to 0^+$ in \eqref{eq:Cauchy_v1}, while keeping $\nu>0$ fixed, yields the estimate
    \begin{equation}\label{eq:convergence_rate_vv}
        \sup_{t\in [0,T]} \E[\| \theta^\nu_t-\theta_t\|_{\dot H^{-1}}^2] + \E\bigg[ \int_0^T \| \theta^\nu_t-\theta_t\|_{\dot H^{-\alpha}}^2 \dd t \bigg]
        \lesssim_{C_i,T}  \| \theta^\nu_0-\theta_0\|_{\dot H^{-1}}^2 + \nu^{2-\alpha} \| \theta^\nu_0\|_{L^2}^2.
    \end{equation}
    For suitable initial data, say $\theta_0\in L^1\cap L^\infty\cap \dot H^{-1}$, taking $\theta^\nu_0=\theta_0$ yields a polynomial rate of order $\nu^{2-\alpha}$.
    We leave the question of further improving either the rate or the norms involved for future research.  
\end{remark}

\section{Pathwise uniqueness} \label{sec:uniqueness}

We are now ready to complete the proofs of our main results; we start with the one concerning 2D Euler.

\begin{proof}[Proof of \cref{thm:main_Euler}]
Strong existence follows from \cref{thm:strong_existence}, so we only need to show pathwise uniqueness.
Let $\theta^1,\,\theta^2 \in L^\infty_{\omega,t} L ^p \cap L^2_{\omega, t} ( \dot H^{-1} \cap H^{-\alpha} )$ be two weak solutions to \eqref{eq:SPDEs_Ito}, with respect to the same tuple $(\Omega,\cF,\cF_t,\P,W)$ and starting from the same initial datum $\theta_0\in L^p$.
Without loss of generality, we may assume $\theta^1$ to be the strong solution given by \cref{thm:strong_existence} (which can be constructed on any probability space carrying $W$); indeed if we show that any other weak solution coincides with $\theta^1$, pathwise uniqueness follows. As a consequence, we have the additional information that $\theta\in \cU^{p_\star}$.

Let $\xi \coloneqq \theta^1-\theta^2$, $u \coloneqq u^1-u^2$.
As in \cref{thm:strong_existence}, we would like to proceed with a Gr\"onwall-type estimate for $\E[\| \xi_t\|^2_{\dot H^{-1}}]$. Due to poor regularity of solutions, we cannot directly apply It\^o formula to $\| \xi_t\|_{\dot H^{-1}}^2$ and we need to argue by regular approximations; specifically we look at $\langle G^\delta \ast \xi, \xi \rangle$, for $G$, $G^\delta$ as defined in \cref{subsec:covariance}.
Recalling the more convenient rewriting of the nonlinearity $\cN^0(\theta^i)$ given by \eqref{eq:Euler_nonlin_momentum}, after integration by parts one finds
\begin{equation}
    \begin{split}\label{eq:smooth_evolution}
		\dd\langle G^\delta \ast \xi_t, \xi_t \rangle =
		&  2 \langle \nabla^\perp G^\delta \ast \xi_t, (u^1_t \cdot \nabla) u^1_t- (u^2_t \cdot \nabla) u^2_t  \rangle \dd t\\
		& + \langle \trace [ Q  D^2G^\delta] \ast \xi_t, \xi_t \rangle \dd t + 2 \langle \xi_t \nabla G^\delta \ast \xi_t, \dd W_t \rangle;	
    \end{split}
\end{equation}
the term associated to $\trace [ Q  D^2G^\delta]$ is obtained combining the It\^o-Stratonovich correction and the It\^o second-order term as in \cite[Lem. 4.1]{coghi2023existence}.
Since $G^\delta$ is a Schwartz function,
\begin{align*}
    \|  (\nabla G^\delta \ast \xi_t) \, \xi_t \|_{H^{-1-\alpha}}
    \lesssim \| \nabla G^\delta \ast \xi_t\|_{C^2} \| \xi_t\|_{H^{-1-\alpha}} \lesssim_\delta \| \xi_t\|_{H^{-1-\alpha}}^2
\end{align*}
and so the stochastic integral appearing in \eqref{eq:smooth_evolution} is a martingale by \cite[Lem. 2.8]{GalLuo2025}.
Taking the expectation in \eqref{eq:smooth_evolution}, after some manipulations we obtain 
	\begin{align}
		\E \langle G^\delta \ast \xi_t, \xi_t \rangle =
		&  2 \E \int_0^t \langle \nabla^\perp G^\delta \ast \xi_s, (u_s \cdot \nabla) u^1_s \rangle  \dd s
		+ 2 \E \int_0^t \langle \nabla^\perp G^\delta \ast \xi_s, (u^2_s \cdot \nabla) u_s  \rangle \dd s \nonumber \\
		& + \E \int_0^t\langle \trace [ Q D^2G^\delta] \ast \xi_s, \xi_s \rangle \dd s. \label{eq:smooth_evolution_v2}
	\end{align}
    By construction and dominated convergence, the l.h.s. of \eqref{eq:smooth_evolution_v2} converges to $\E \| \xi_t\|_{\dot H^{-1}}^2$ as $\delta\to 0^+$; convergence of the last term instead is given by \cref{cor:reg_term_convergence}.
    Concerning the first two terms in the r.h.s. of \eqref{eq:smooth_evolution_v2}, convergence is granted by dominated convergence together with uniform-in-$\delta$ bounds coming from the application of \eqref{eq:tril1}. We provide details for the second one, the first being simpler.
    By \eqref{eq:def_varphi},  it holds $G^\delta \ast \xi = G \ast (\chi_\delta \ast \xi)\eqqcolon G \ast \xi^\delta$ with $|\widehat \chi_\delta (n)| \le 1$, so by \eqref{eq:tril1} we have
    \begin{equation}\label{eq:Youngs_ineq}\begin{split}
        |\langle \nabla^\perp G^\delta \ast \xi, (u^2 \cdot \nabla) u  \rangle| 
        &=|\langle \nabla^\perp G \ast \xi^\delta, (u^2 \cdot \nabla) u  \rangle|\\
        &\lesssim \| \xi^\delta \|_{\dot H^{-\alpha}}^{1-r_1} \| \xi^\delta \|_{\dot H^{-1}}^{r_1} \| \theta^2 \|_{\dot H^{-\alpha}}^{1-{r_2}} \| \theta^2 \|_{\dot H^{-1}}^{r_2}\| \xi \|_{L^p}\\
        &\le \| \xi \|_{\dot H^{-\alpha}}^{1-r_1} \| \xi \|_{\dot H^{-1}}^{r_1} \| \theta^2 \|_{\dot H^{-\alpha}}^{1-{r_2}} \| \theta^2 \|_{\dot H^{-1}}^{r_2}\| \xi \|_{L^p}\\
        & \lesssim \big(\| \xi \|_{\dot H^{-\alpha}}^{2} + \| \xi \|_{\dot H^{-1}}^{2} + \| \theta^2 \|_{\dot H^{-\alpha}}^{2} + \| \theta^2 \|_{\dot H^{-1}}^{2}\big) \| \xi \|_{L^p}
    \end{split}\end{equation}
    where in the last step we applied (multiple times) Young's inequality.
    By our assumptions on $\theta^i$,  the last term in \eqref{eq:Youngs_ineq} belongs to $L^1_{\omega, t}$ and provides an explicit dominant for $|\langle \nabla^\perp G^\delta \ast \xi, (u^2 \cdot \nabla) u  \rangle|$.
    Passing to the limit as $\delta\to 0^+$ in \eqref{eq:smooth_evolution_v2}, we obtain
	\begin{equation}\label{eq:ineq_balance}\begin{split}
		\E \| \xi_t\|^2_{\dot H^{-1}} =
		&  -2 \E \int_0^t \langle u_s, (u_s \cdot \nabla) u^1_s \rangle  \dd s  + \E \int_0^t \int_{\R^2} F(n) |\widehat \xi_s(n)|^2 \dd n \dd s\\
        & \leq -2 \E \int_0^t \langle u_s, (u_s \cdot \nabla) u^1_s \rangle  \dd s - \kappa_1 \int_0^t \| \xi_s\|_{\dot H^{-\alpha}}^2 \dd s + \kappa_2 \int_0^t \| \xi_s\|_{\dot H^{-1}}^2 \dd s
	\end{split}\end{equation}
    where in the first line we used that $\langle \nabla^\perp G \ast \xi_s, (u^2_s \cdot \nabla )u_s  \rangle =  -\langle u_s, (u^2_s \cdot \nabla )u_s  \rangle =0$, since $u_2$ is divergence free, and in the second one we applied \eqref{eq:regularization_convergence}.
    
    Note that in \eqref{eq:ineq_balance} $u^2$ does not appear anymore; this allows to perform a weak-strong uniqueness type argument, exploiting the additional information on the strong solution $\theta^1$ coming from \cref{thm:strong_existence}, in the same vein as \cite{BGM2025}.
    For $\eps>0$ to be fixed later, consider a decomposition $\theta^1= \theta^{1,>} + \theta^{1,<}$ with $\| \theta^{1,>} \|_{L^\infty_{\omega, t} L^{p_\star}}\leq \eps$ and $\| \theta^{1,<} \|_{L^\infty_{\omega, t} L^{q}}\leq C_\eps$ for some $q \in (p_\star, \infty)$; then we have $u^1= u^{1,>} + u^{1,<}$, with $u^{1,\gtrless} \coloneqq - \nabla^ \perp \Lambda^{-2} \theta^{1,\gtrless}$.
    Similarly to \eqref{eq:collection_nonlinear_terms}, we can split the nonlinear term and employ the estimate \eqref{eq:tril1} to find
    \begin{align*}
        |\langle u, (u \cdot \nabla) u^1\rangle |
        & \le  |\langle u, (u \cdot \nabla) u^{1,>}\rangle | + |\langle u, (u \cdot \nabla) u^{1,<}\rangle|\\
        & \lesssim \| \xi \|^2_{\dot H^{-\alpha}} \| \theta^{1,>} \|_{L^{p_\star}} + \| \xi \|_{\dot H^{-\alpha}}^{2-{r_1}-r_2} \| \xi \|_{\dot H^{-1}}^{r_1+r_2}\| \theta^{1,<} \|_{L^q}\\
        & \lesssim \eps \| \xi \|^2_{\dot H^{-\alpha}} + C_\eps \| \xi \|_{\dot H^{-\alpha}}^{2-{r_1}-r_2} \| \xi \|_{\dot H^{-1}}^{r_1+r_2}.
    \end{align*}
    We can then choose $\eps>0$ small enough so to reabsorb the first term using $-\kappa_1 \| \xi\|_{\dot H^{-\alpha}}^2$ and use Young's inequality on the second one (since $r_1+r_2\in (0,2)$); inserting such estimates in \eqref{eq:smooth_evolution_v2}, one arrives at
    \begin{equation*}		
		\E \| \xi_t\|^2_{\dot H^{-1}} +\frac{\kappa_1}{2}\, \E \int_0^t \|\xi_s \|^2_{\dot H^{-\alpha}}\dd s 
		\le \tilde C\, \E \int_0^t \|\xi_s \|^2_{\dot H^{-1}}\dd s.
	\end{equation*}
    Pathwise uniqueness readily follows by Gr\"onwall's inequality.
\end{proof}

\begin{proof}[Proof of \cref{thm:main_gSQG}]
    The proof is a slight variant on both \cref{thm:main_Euler,thm:strong_existence}, so we mostly sketch it, highlighting the main differences.
    
    Let $\theta^i\in L^\infty_{\omega,t} L ^p \cap L^2_{\omega, t} ( \dot H^{-1} \cap \dot H^{-\alpha} )\cap \cU^{p_\star}$ be two weak solutions, defined on the same probability space, driven by the same $W$ and with same initial datum $\theta_0$; set $u^i=-\nabla^\perp \Lambda^{\beta-2}\theta^i$, $h^i=-\nabla^\perp \Lambda^{-2}\theta^i$, $u=u^1-u^2$, $h=h^1-h^2$.
    We perform the same strategy of \cref{thm:main_Euler}, based on deriving the balance for $\E\|\xi_t\|_{\dot H^{-1}}^2$ by first looking at $\E \langle G^\delta\ast \xi_t,\xi_t\rangle$ and then passing to the limit. Similarly to \cref{thm:strong_existence}, this time we express the nonlinear terms $\cN^\beta(\theta^i)$ using \eqref{eq:gSQG_nonlin_momentum}.
    Using appropriate bounds from \cref{lem:trilinear_estimates}, dominated convergence and \cref{cor:reg_term_convergence}, taking the limit $\delta\to 0^+$ eventually one arrives at
    \begin{equation}\label{eq:balance_gSQG_v1}
		\E \| \xi_t\|^2_{\dot H^{-1}} + \kappa_1 \int_0^t \| \xi_s\|^2_{\dot H^{-\alpha}} \dd s \leq \kappa_2 \int_0^t \| \xi_s\|^2_{\dot H^{-1}} \dd s + 2 \int_0^t \E\big[ I_s + J_s + K_s\big] \dd s
    \end{equation}
    where
    \begin{align*}
		I_s \coloneqq |\langle  h_s, (u_s \cdot \nabla) h^1_s \rangle|,\quad
        J_s \coloneqq |\langle  h_s,  (\nabla h^2_s)^T  u_s  \rangle|, \quad
        K_s \coloneqq |\langle  h_s, (\nabla h_s)^T u^1_s \rangle  |;
	\end{align*}
    we used the cancellation of one contribution: $\langle h_s, (u^2_s \cdot \nabla) h_s  \rangle =0$ since $u^1$ is divergence free.
    
    The terms $I_s$ and $K_s$ depend explicitly on $\theta^1$, while $J_s$ depends on $\theta^2$, which prevents the use of the same weak-strong uniqueness argument as in \cref{thm:main_Euler}; this is why uniqueness is restricted to solutions $\theta^i$ additionally belonging to $\cU^{p_\star}$, i.e. enjoying nice decompositions $\theta^i=\theta^{i,>}+\theta^{i,<}$ with $\theta^{i,<}\in L^\infty_{\omega,t} L^{q_i}$ for some $q_i\in (p_\star,\infty)$.
    For notational simplicity, below we will assume $q_1=q_2$, although it's not necessary. 
    Decomposing accordingly $u^i$ and $h^i$, one can perform similar bounds as in \eqref{eq:collection_nonlinear_terms} (with $\theta^1$ playing the role of $\theta^\nu$, $\theta^2$ of $\theta^{\tilde \nu}$); choosing $\eps>0$ small enough, inserting these estimates in \eqref{eq:balance_gSQG_v1}, one arrives at
    \begin{align*}
		\E \| \xi_t\|^2_{\dot H^{-1}} + \frac{\kappa_1}{2} \int_0^t \| \xi_s\|^2_{\dot H^{-\alpha}} \dd s
        \lesssim \int_0^t \| \xi_s\|^2_{\dot H^{-1}} \dd s +
        \int_0^t \E\big[ \| \xi \|_{\dot H^{-\alpha}}^{2-{r_1}- r_2} \| \xi \|_{\dot H^{-1}}^{r_1+ r_2}\big] \dd s
    \end{align*}
    A further application of Young's inequality and Gr\"onwall's lemma provide the conclusion.
\end{proof}

\appendix
\section{Technical lemmas}\label{app:tech_lem}

\begin{lemma}\label{lem:approximation}
    Let $p\in (1,\infty)$, $f\in L^p\cap \dot H^{-1}$. Then there exists a family $\{f^\eps\}_{\eps>0}\subset C^\infty_c$ which is bounded in $L^p\cap \dot H^{-1}$ such that $f^\eps\to f$ therein.
    Moreover, uniformly in $p$, we have the estimate
    \begin{equation}\label{eq:approx_estim}
        \| f^\eps\|_{L^2} \lesssim \eps^{-1} \| f\|_{\dot H^{-1}}
    \end{equation}
\end{lemma}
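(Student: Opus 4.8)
The plan is to get the $L^p$-approximation and the $L^2$-bound from a mollification, and the $\dot H^{-1}$-approximation — together with the requirement that the approximants genuinely belong to $\dot H^{-1}$ — by truncating in physical space \emph{a divergence potential of $f$}, rather than $f$ itself. This last twist is the only delicate point, and it is specific to $d=2$: since $\dot H^{-1}(\R^2)$ is the borderline space $\dot H^{-d/2}$, a generic $\psi\in C^\infty_c(\R^2)$ does not belong to it (the integral $\int_{|n|<1}|n|^{-2}|\widehat\psi(n)|^2\dd n$ diverges unless $\int\psi=0$), so the $f^\eps$ must be exactly mean-zero; a naive cutoff of $f$ would destroy this, whereas $\div G$ with $G\in L^2(\R^2;\R^2)$ always lies in $\dot H^{-1}$, with $\|\div G\|_{\dot H^{-1}}\lesssim\|G\|_{L^2}$.

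Concretely: since $f\in\dot H^{-1}(\R^2)$ we may write $f=\div F$ with $F:=\nabla\Delta^{-1}f\in L^2(\R^2;\R^2)$ and $\|F\|_{L^2}\lesssim\|f\|_{\dot H^{-1}}$. Fix a mollifier $\rho\in C^\infty_c$, $\rho\geq0$, $\int\rho=1$, $|\widehat\rho|\leq1$, and set $F_\eps:=F\ast\rho_\eps$, so that $F_\eps\in C^\infty\cap L^2\cap L^\infty$, $\|F_\eps\|_{L^2}\leq\|F\|_{L^2}$, $F_\eps\to F$ in $L^2$, and $\div F_\eps=f\ast\rho_\eps$, which converges to $f$ in $L^p$ (using $f\in L^p$) and in $\dot H^{-1}$. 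The $L^2$-bound is
\[
\|f\ast\rho_\eps\|_{L^2}=\|\div(F\ast\rho_\eps)\|_{L^2}\leq\|F\ast\nabla\rho_\eps\|_{L^2}\leq\|F\|_{L^2}\,\|\nabla\rho_\eps\|_{L^1}\lesssim\eps^{-1}\|f\|_{\dot H^{-1}},
\]
with a constant depending only on $\rho$, hence uniform in $p$. Now fix $\phi\in C^\infty_c(\R^2)$ with $\phi\equiv1$ on $B_1$, $0\leq\phi\leq1$, $\supp\phi\subset B_2$, put $\phi_M:=\phi(\cdot/M)$, and set
\[
f_{\eps,M}:=\div(\phi_M F_\eps)=\phi_M\,(f\ast\rho_\eps)+\nabla\phi_M\cdot F_\eps\in C^\infty_c ;
\]
since $\phi_M F_\eps\in L^2\cap C^\infty_c$, we have $f_{\eps,M}\in C^\infty_c\cap\dot H^{-1}$ (equivalently, $\int f_{\eps,M}=0$ by the divergence theorem).

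It remains to let $M\to\infty$ with $\eps$ fixed. In $\dot H^{-1}$, $\|f_{\eps,M}-f\ast\rho_\eps\|_{\dot H^{-1}}=\|\div((\phi_M-1)F_\eps)\|_{\dot H^{-1}}\lesssim\|(\phi_M-1)F_\eps\|_{L^2}\to0$ by dominated convergence; in $L^p$ one has $\|(\phi_M-1)(f\ast\rho_\eps)\|_{L^p}\to0$, while the commutator obeys $\|\nabla\phi_M\cdot F_\eps\|_{L^p}\lesssim M^{-1}\|F_\eps\,\mathbbm{1}_{\{M\leq|x|\leq 2M\}}\|_{L^p}$, which tends to $0$ either by Hölder's inequality on the annulus when $1<p\leq2$ (gaining a power $M^{2/p-2}$) or, when $p\geq2$, directly from $F_\eps\in L^2\cap L^\infty\subset L^p$ and the decay of its tail; finally $\|f_{\eps,M}\|_{L^2}\leq\|f\ast\rho_\eps\|_{L^2}+M^{-1}\|\nabla\phi\|_{L^\infty}\|F_\eps\|_{L^2}$. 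Choosing, for each $\eps>0$, $M=M(\eps)$ large enough that $\|f_{\eps,M}-f\ast\rho_\eps\|_{L^p\cap\dot H^{-1}}\leq\eps$ and $M(\eps)^{-1}\|\nabla\phi\|_{L^\infty}\|F_\eps\|_{L^2}\leq\|f\|_{\dot H^{-1}}$, and setting $f^\eps:=f_{\eps,M(\eps)}$, we obtain $f^\eps\in C^\infty_c\cap\dot H^{-1}$ with $f^\eps\to f$ in $L^p\cap\dot H^{-1}$ (so the family is bounded there) and $\|f^\eps\|_{L^2}\lesssim\eps^{-1}\|f\|_{\dot H^{-1}}$ uniformly in $p$. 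The one genuinely delicate step, as flagged above, is ensuring membership in and convergence in $\dot H^{-1}(\R^2)$ at the critical exponent $-d/2$, which is exactly what forces the truncation to act on the potential $F$ and not on $f$; the rest is routine.
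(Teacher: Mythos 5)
Your proof is correct and follows essentially the same route as the paper's: both represent $f$ as the divergence of the $L^2$ potential $\nabla\Lambda^{-2}f$, truncate that potential in physical space to preserve membership in $\dot H^{-1}(\R^2)$, mollify to get the $L^2$ bound $\eps^{-1}\|f\|_{\dot H^{-1}}$ from $\|\nabla\rho_\eps\|_{L^1}$, and handle the commutator term $\nabla\phi_M\cdot F_\eps$ in $L^p$ by the same case split $1<p\le 2$ versus $p>2$. The only cosmetic differences are the order of the two operations (you mollify then truncate, the paper truncates then mollifies) and your use of a second parameter $M$ subsequently diagonalized, where the paper ties the truncation scale directly to $\eps$.
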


\begin{proof}
    Similar approximations schemes have previously appeared e.g. in \cite[Sec. 3.2]{coghi2023existence}, but we provide a self-contained proof.
    
    Let $\psi$ be a smooth function such that $\psi\equiv 1$ for $|x|<1$, $\psi\equiv 0$ for $|x|\geq 2$; set $\psi^\eps(x):= \psi(\eps x)$. Let $\varphi\in C^\infty_c$ be a probability density, denote by $\varphi^\eps$ the associated standard mollifiers. Set
    \begin{equation*}
        f^\eps:= \varphi^\eps \ast \tilde f^\eps, \quad \tilde f^\eps:=-\nabla\cdot (\psi^\eps \nabla \Lambda^{-2} f)=\psi^\eps f - \nabla\psi^\eps\cdot\nabla\Lambda^{-2} f
    \end{equation*}
    Clearly $\tilde f^\eps$ are compactly supported distributions; if we show that $\tilde f^\eps\to f$ in $L^p\cap \dot H^{-1}
    $, the first part of the statement follows by properties of mollifiers. It holds
    \begin{align*}
        \| \tilde f^\eps - f\|_{\dot H^{-1}}
        = \| \nabla\cdot ((1-\psi^\eps) \nabla \Lambda^{-2} f\|_{\dot H^{-1}}\leq \| (1-\psi^\eps) \nabla \Lambda^{-2} f\|_{L^2}
    \end{align*}
    and the last quantity converges to $0$ by dominated convergence since $\nabla \Lambda^{-2} f\in L^2$. The same argument shows that
    \begin{align*}
        \sup_{\eps>0} \|\tilde f^\eps\|_{\dot H^{-1}}\leq \| f\|_{\dot H^{-1}}
    \end{align*}
    which by properties of mollifiers implies
    \begin{align*}
        \| f^\eps\|_{L^2} \lesssim \| \nabla\varphi^\eps\|_{L^1} \| \tilde f^\eps\|_{\dot H^{-1}}\lesssim \eps^{-1} \| f\|_{\dot H^{-1}}
    \end{align*}
    proving \eqref{eq:approx_estim}.
    It's clear that $\psi^\eps f\to f$ in $L^p$, so it remains to show that $\nabla\psi^\eps\cdot\nabla \Lambda^{-2} f\to 0$ in $L^p$. We divide it in two cases.

    \textit{Case $p\in (1,2]$:} By assumption and properties of Calder\'on-Zygmund operators, $\nabla\Lambda^{-2}f\in L^2\cap \dot W^{1,p}\hookrightarrow L^q$ for all $q\in [2,q_\ast)$, $q_\ast=2p/(2-p)>2$ being the exponent associated to Sobolev embedding. On the other hand, $\| \nabla\psi^\eps\|_{L^r}\sim \eps^{1-2/r}$ converges to $0$ for all $r\in (2,\infty)$. So in this case conclusion follows by H\"older's inequality.
    
    \textit{Case $p\in (2,\infty)$:} In this case embeddings yield $\nabla\Lambda^{-2} f\in L^2\cap \dot W^{1,p}\hookrightarrow L^p$ and conclusion follows from $\|\nabla\psi^\eps\|_{L^\infty}\sim \eps\to 0$.
\end{proof}

\begin{proof}[Proof of \cref{lem:anom_reg}]
	The proof is achieved by combining results from \cite{coghi2023existence, JiaoLuo2025_Euler}. Following \cite[eq. (7.5)]{coghi2023existence}, we split
	\begin{align*}
		\trace\left[Q(x)D^2G^{\delta}(x)\right]
		&=  \trace\left[\left(Q(x)\right)D^2G(x)\right] \varphi(x)
         - \trace\left[Q(x)D^2(G-G^{\delta})(x)\right]\varphi(x) \nonumber\\
		& \quad + \trace[Q(x) D^2G^{\delta}(x)](1-\varphi(x))
		\nonumber\\
		& =:A+R_2^\delta+R_3^\delta
	\end{align*}
    where $\varphi(x)=\varphi(|x|)$ is a radial $C^\infty$ function such that $0\le \varphi\le 1$, $\varphi(x)=1$ for $|x|\le 1$ and $\varphi(x)=0$ for $|x|\ge 2$;
    in the above, we can identify the distribution $D^2G$ with the pointwise-defined second derivative of $G$, since its non-integrable contribution at the origin (which would result in a Dirac term) is compensated by $|Q(x)|\lesssim |x|^{2\alpha}$. Then, \cite[Lem. 4.3]{coghi2023existence} guarantees that
	\begin{equation*}
		\widehat{A}(n) \le - c_1\langle n\rangle^{-2\alpha} +C_1\langle n\rangle^{-2}
	\end{equation*}  
	and from its proof it is also clear that $|\widehat{A}(n)| \lesssim\langle n\rangle^{-2\alpha} +\langle n\rangle^{-2}.$
	Concerning the term $R_2^\delta$, \cite[Lem. 3.1]{JiaoLuo2025_Euler} shows that $|\widehat{R}^\delta_2(n)|\le C_2 \langle n\rangle^{-2\alpha}$,	while $R_3^\delta$ is addressed in \cite[Lem. 4.5]{coghi2023existence}, where it is established  that $|\widehat{R}^\delta_3(n)| \le C_3 |n|^{-2}$.
\end{proof}

\begin{proof}[Proof of \cref{cor:reg_term_convergence}]
Let $F^\delta\coloneq c\cF (\trace [ Q D^2G^\delta])$, where $c$ is a dimensional constant depending on the convention of Fourier transform, so that
\begin{align*}
    \int_0^t \langle \trace [ Q D^2G^\delta]\ast \xi_s, \xi_s\rangle \dd s
    = \int_0^t \int_{\R^d} F^\delta(n)|\hat\xi_s(n)|^2 \dd n \dd s;
\end{align*}
basic manipulations with Fourier transform (cf. \cite[Sec. 3]{GGM2024}) and formula \eqref{eq:def_varphi} yield
\begin{align*}
    F^\delta(n)=  c \int_{\R^2} \langle n-k \rangle^{-2-2\alpha} |P^\perp_{n-k} n|^2 \left(\hat\chi_\delta(k)|k|^{-2} - \hat\chi_\delta(n)|n|^{-2} \right)\dd k;
\end{align*}
since $\hat\chi_\delta(k)\to 1$ pointwise and $|\hat\chi_\delta(k)|\leq 1$, dominated convergence implies that $F^\delta(n)\to F(n)$ for every $n\neq 0$. On the other hand, by \cref{lem:anom_reg} we have the pointwise estimate
\begin{align*}
    \sup_{\delta>0} |F^\delta(n)-F(n)|\lesssim |n|^{-2\alpha}+|n|^{-2}.
\end{align*}
In particular, $|F^\delta(n)-F(n)||\hat \xi_s(n)|^2$ is dominated by a multiple of $(|n|^{-2\alpha}+|n|^{-2})|\hat \xi_s(n)|^2$, where
\begin{align*}
    \int_0^T \int_{\R^2} (|n|^{-2\alpha}+|n|^{-2})|\hat\xi_s(n)|^2 \dd n \dd s
    = \| \xi\|_{L^2_t (\dot H^{-1}\cap \dot H^{-\alpha})}^2.
\end{align*}
As the last random variable is integrable by assumption, it is $\P$-a.s. finite.
In particular, at any fixed $\omega\in\Omega$ such that $\| \xi(\omega)\|_{L^2_t (\dot H^{-1}\cap \dot H^{-\alpha})}<\infty$, by dominated convergence (applied to the integral in $(s,n)$) it holds that
\begin{align*}
    \sup_{t\in [0,T]} \bigg|\int_0^t \langle \trace [ Q D^2G^\delta]\ast \xi_s, \xi_s\rangle \dd s & - \int_0^t \int_{\R^2} F(n) |\hat \xi_s(n)|^2 \dd n \dd s\bigg|\\
    & \leq \int_0^T \int_{\R^2} |F^\delta(n)-F(n)| |\hat\xi_s(n)|^2 \dd n \dd s\to 0 \quad \text{as }\ \delta\to 0^+
\end{align*}
which proves the $\P$-a.s. uniform limit.
The inequality in \eqref{eq:regularization_convergence} then follows from \eqref{eq:fundamental_estimate_covariance}.
\end{proof}

\textbf{Acknowledgements.}
    The authors acknowledge support from the Istituto Nazionale di Alta Matematica (INdAM) through the project GNAMPA 2025 “Modelli stocastici in Fluidodinamica e Turbolenza”. We are grateful to Mario Maurelli for useful discussion while working on this project.
    \\

\textbf{Conflict of interest statement.} On behalf of all authors, the corresponding author states that there is no conflict of interest.

\textbf{Data availability statement.} This manuscript has no associated data.

\bibliography{biblio.bib}{}
\bibliographystyle{plain}

\end{document}